\numberwithin{equation}{section}
\newtheorem{main}{Theorem}[section]
\newtheorem{prop}[main]{Proposition}
\newtheorem{lem}[main]{Lemma}
\newtheorem{cor}[main]{Corollary}
\newtheorem{imain}{Theorem}
\newtheorem*{iprob*}{Problem}
\newenvironment{customthm}[1]
  {\innercustomthm}
  {\endinnercustomthm}
\theoremstyle{definition}
\newtheorem{defi}[main]{Definition}
\newtheorem{rem}[main]{Remark}
\newtheorem{rems}[main]{Remarks}
\newtheorem{exam}[main]{Example}
\newcommand{\sH}{H}
\newcommand{\R}{\mathbf{R}}
\newcommand{\N}{\mathbf{N}}
\newcommand{\bS}{\mathbf{S}}
\newcommand{\PSL}{\mathbf{PSL}}
\newcommand{\HH}{\mathbf{H}} 
\newcommand{\HHI}{\mathbf{H}^{\infty}}
\newcommand{\SO}{\mathbf{SO}} 
\newcommand{\OO}{{\mathbf O}}
\newcommand{\PO}{\mathbf{PO}}
\newcommand{\Isomi}{{\Isom}(\HHI)}
\newcommand{\Isom}{{\rm Is}}
\newcommand{\Aut}{{\rm Aut}}
\newcommand{\Mob}{\mathrm{M\ddot ob}}
\newcommand{\se}{\subseteq}
\newcommand{\inv}{^{-1}}
\newcommand{\ro}{\varrho}
\newcommand{\fhi}{\varphi}
\newcommand{\lra}{\longrightarrow}
\newcommand{\Id}{\mathrm{Id}}
\newcommand{\wt}{\widetilde}
\newcommand{\ol}{\overline}
\newcommand{\cat}[1]{{\upshape CAT({\ensuremath#1})}\xspace}
\DeclareMathOperator{\arcosh}{arcosh}
\newcommand{\bx}{B'}
\def\hyph{-\penalty0\hskip0pt\relax}
\title[Self-representations of the M\"obius group]{Self-representations of the M\"obius group}
\date{May 2018}
\author[Nicolas Monod]{Nicolas Monod}
\address{EPFL, Switzerland}
\email{nicolas.monod@epfl.ch}
\author[Pierre Py]{Pierre Py$^*$}
\address{Instituto de Matem\'aticas, Universidad Nacional Aut\'onoma de M\'exico, M\'exico}
\email{py@im.unam.mx}
\thanks{$^*$Partially supported by the french project ANR AGIRA and by project papiit IA100917 from DGAPA UNAM}
\begin{document}
\begin{abstract}
Contrary to the finite-dimensional case, the M\"obius group admits interesting self\hyph{}representations when infinite\hyph{}dimensional. We construct and classify all these self\hyph{}representations.

The proofs are obtained in the equivalent setting of isometries of Lobachevsky spaces and use kernels of hyperbolic type, in analogy to the classical concepts of kernels of positive and negative type.
\end{abstract}
\maketitle
\thispagestyle{empty}

\section{Introduction}
\subsection{Context}
For an ordinary connected Lie group, the study of its continuous \emph{self\hyph{}representations} is trivial in the following sense: every injective self\hyph{}representation is onto, and hence an automorphism. 

In the infinite\hyph{}dimensional case, another type of ``tautological'' self\hyph{}representations presents itself. Namely, the group will typically contain isomorphic copies of itself as natural proper subgroups. For instance, a Hilbert space will be isomorphic to most of its subspaces.

Remarkably, some infinite\hyph{}dimensional groups admit also completely different self\hyph{}representations which are not in any sense smaller tautological copies of themselves. This phenomenon has no analogue in finite dimensions and the simplest case is as follows.

Let $E$ be a Hilbert space and $\Isom(E)\cong E \rtimes \OO(E)$ its isometry group. To avoid the obvious constructions mentioned above, we only consider \emph{cyclic} self\hyph{}representations (in the affine sense). It is well-known that there is a whole wealth of such self\hyph{}representations. They are described by \emph{functions of conditionally negative type}. More precisely, the question becomes equivalent to describing all \emph{radial} functions of conditionally negative type on $E$ because one can arrange, by conjugating, that $\OO(E)$ maps to itself. Thus the question reduces to the study of a fascinating space of functions $\Psi\colon \R_+\to \R_+$, and new such functions can be obtained by composing a given $\Psi$ with any \emph{Bernstein function}~\cite{Schilling-Song-Vondracek_2}.

We see that this first example, $\Isom(E)$, has many --- almost too many --- self\hyph{}representations for a precise classification. How about other  infinite\hyph{}dimensional groups? Are they too rigid to admit any, or again so soft as to admit too many?

\bigskip
Considering that $\Isom(E)$ sits in the much larger \emph{M\"obius group} $\Mob(E)$ of $E$, this article answers the following questions:

\medskip
\begin{itemize}
\item  Does any non-tautological $\Isom(E)$-representation extend to $\Mob(E)$?
\item Can the irreducible self\hyph{}representations of $\Mob(E)$ be classified? 
\item Among all Bernstein functions, which ones correspond to M\"obius representations?
\end{itemize}

\medskip
In short, the answer is that the situation is much more rigid than for the isometries $\Isom(E)$, but still remains much richer than in the finite\hyph{}dimensional case. Specifically, there is exactly a one\hyph{}parameter family of self\hyph{}representations. This appears as a continuous deformation of the tautological representation, given by the Bernstein functions $x\mapsto x^t$ where the parameter $t$ ranges in the interval $(0, 1]$.

\subsection{Formal statements}
Recall that the M\"obius group $\Mob(E)$ is a group of transformations of the conformal sphere $\widehat E = E\cup\{\infty\}$; it is generated by the isometries of $E$, which fix $\infty$, and by the inversions $v \mapsto (r/\|v\|)^2 v$, where $r>0$ is the inversion radius (see e.g.~\cite[I.3]{Reshetnyak}). In particular it contains all homotheties.

A first basic formalisation of the existence part of our results is as follows. Let $E$ be an infinite\hyph{}dimensional separable real Hilbert space.

%
%

\begin{imain}\label{thm:Mobnew}
For every $0<t\leq 1$ there exists a continuous self\hyph{}representation $\Mob(E)\to \Mob(E)$
%
%
with the following properties:
\begin{itemize}
\item it restricts to an affinely irreducible self\hyph{}representation of $\Isom(E)$,
\item it maps the translation by $v\in E$ to an isometry with translation part of norm $\|v\|^t$,
\item it maps homotheties of dilation factor $\lambda$ to similarities of dilation factor $\lambda^t$. 
\end{itemize}
\end{imain}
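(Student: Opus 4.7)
The plan is to use the isomorphism $\Mob(E)\cong\Isom(\HH^\infty)$ and build each self\hyph{}representation from a radial conditionally negative\hyph{}type function on $E$. The starting input is Schoenberg's theorem: applied to the negative\hyph{}type kernel $(v,v')\mapsto\|v-v'\|^2$ and the Bernstein function $x\mapsto x^t$, it says that $\psi_t(v)=\|v\|^{2t}$ is of conditionally negative type on $E$ for every $t\in(0,1]$. The associated Gelfand\hyph{}Naimark\hyph{}Segal construction then produces a separable Hilbert space $E_t$ and a continuous map $\iota_t\colon E\to E_t$ with $\iota_t(0)=0$ and
\[
\|\iota_t(v)-\iota_t(v')\|=\|v-v'\|^t\qquad (v,v'\in E).
\]
Equivalently, $\iota_t$ is an isometric embedding of the snowflaked metric space $(E,\|\cdot-\cdot\|^t)$ into $E_t$, and it already intertwines a natural affine isometric action of $\Isom(E)\cong E\rtimes\OO(E)$ on $E_t$ for which a translation by $v$ has translation part $\iota_t(v)$ of norm $\|v\|^t$.

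The key step is to upgrade this to a $\Mob(E)$\hyph{}equivariant M\"obius embedding $\widehat E\to\widehat{E_t}$. The heuristic is that the snowflaked metric and the original Euclidean metric share the same M\"obius group, because cross\hyph{}ratios in $\|\cdot-\cdot\|^t$ are the $t$\hyph{}th powers of the Euclidean cross\hyph{}ratios, so a map preserves one family if and only if it preserves the other. Concretely, each generator of $\Mob(E)$ acts as a M\"obius transformation of $(E,\|\cdot-\cdot\|^t)$ and, after transport through $\iota_t$, extends uniquely to a M\"obius transformation of $E_t$. Translations and rotations extend to affine isometries; a homothety $H_\lambda$ extends to a similarity of dilation factor $\lambda^t$, since $H_\lambda$ itself is a $\lambda^t$\hyph{}similarity of $(E,\|\cdot-\cdot\|^t)$; and the inversion in the unit sphere of $E$ extends to the inversion in the unit sphere of $E_t$, because the unit sphere is preserved under snowflaking and a direct check yields the M\"obius cocycle $\|I(v)-I(v')\|^t=\|v-v'\|^t/(\|v\|^t\|v'\|^t)$. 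Patching these extensions together and verifying compatibility on the generating relations produces the desired continuous homomorphism $\rho_t\colon\Mob(E)\to\Mob(E_t)\cong\Mob(E)$.

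The three listed properties then drop out of the construction. The translation and homothety assertions are immediate from $\|\iota_t(v)\|=\|v\|^t$ and the $\lambda^t$\hyph{}scaling of the snowflake dilation. For the affine irreducibility of $\rho_t|_{\Isom(E)}$, I would argue that $\psi_t$ is extremal among $\OO(E)$\hyph{}invariant conditionally negative\hyph{}type functions on $E$: any $\Isom(E)$\hyph{}invariant affine subspace of $E_t$ corresponds to a splitting of $\psi_t$, and the exact scaling $\psi_t(\lambda v)=\lambda^{2t}\psi_t(v)$, combined with $\OO(E)$\hyph{}invariance and unboundedness of $\psi_t$, rules out nontrivial summands.

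The main obstacle, in my view, is making the boundary/M\"obius extension of $\iota_t$ rigorous within a clean formalism. The natural route, and presumably the one adopted by the authors given their phrasing in the abstract, is to axiomatise the situation via \emph{kernels of hyperbolic type}, developed in parallel with the theory of kernels of positive and negative type. The key technical statement is a Schoenberg\hyph{}type theorem asserting that the operation $\beta\mapsto\beta^t$ (suitably normalised) preserves the hyperbolic\hyph{}type condition for $t\in(0,1]$; unlike the Hilbert case, this is a statement about preservation of a Lorentzian signature condition rather than positivity. I expect the proof to combine the L\'evy\hyph{}Khintchine integral representation of $x\mapsto x^t$ with a horospherical decomposition of $\HH^\infty$, reducing the horospherical component to classical Schoenberg on a Hilbert factor and the radial component to an explicit rank\hyph{}one hyperbolic computation.
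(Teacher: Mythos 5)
Your route is genuinely different from the paper's: the paper never invokes Schoenberg's theorem on $E$, but instead works with the kernel $(\cosh d)^t$ on the hyperbolic space itself, proves it is of hyperbolic type by exhibiting it as a pointwise limit --- via concentration of measure on spheres --- of the kernels $\cosh d(f^n_t(x),f^n_t(y))$ attached to the finite\hyph{}dimensional representations of $\Isom(\HH^n)$ from~\cite{Monod-Py}, and then reconstructs the group action from the kernel. Your boundary construction (GNS of $\|v-v'\|^{2t}$ plus conformal covariance of the snowflake) is in fact viable for producing the homomorphism and the translation/homothety assertions, and is arguably more elementary; so your last paragraph is too pessimistic about what remains to be done, and your guess that the key lemma is proved by L\'evy--Khintchine plus a horospherical decomposition is not how the paper proceeds. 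But the extension step you flag as the ``main obstacle'' is indeed not a formality and is only asserted in your write-up. To close it, parametrise $\widehat{E_t}$ by isotropic vectors $\hat w=(\frac12\|w\|^2,1)\oplus w$ in $\R^2\oplus E_t$, observe that $B'(\hat w,\hat w')=\frac12\|w-w'\|^2$, and check for each generator $g$ that the rescaled vectors $\lambda_g(v)^{-1}\,\widehat{\iota_t(gv)}$ have the same $B'$-Gram matrix as the $\widehat{\iota_t(v)}$, where $\lambda_g$ is the $t$-th power of the conformal factor; for the inversion this is exactly your displayed identity. A $B'$-isometry between total sets of isotropic vectors then extends to an element of $\OO(B')$, i.e.\ of $\Mob(E_t)$ --- this is the content of Theorem~\ref{thm:KHT-char} and Proposition~\ref{completion}, and it is where the hyperbolic\hyph{}type positivity is actually used. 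Note also that the inversion of $E$ is carried to the inversion of $E_t$ only up to post-composition by an element of $\OO(E_t)$, since nothing forces $\iota_t(v/\|v\|^2)$ to equal $\iota_t(v)/\|\iota_t(v)\|^2$ on the nose.

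The genuine gap is the affine irreducibility. Your extremality claim for $\psi_t$ is false for $t<1$: by L\'evy--Khintchine, $\|v\|^{2t}=c_t\int_0^\infty(1-e^{-s\|v\|^2})\,s^{-1-t}\,ds$, and any nontrivial splitting of the measure $s^{-1-t}ds$ splits $\psi_t$ into two nonzero radial functions of conditionally negative type. Worse, the orthogonal part of the GNS representation genuinely contains nonzero $\OO(E)$-invariant vectors: for an orthonormal sequence $(e_i)$ in $E$ and $r>0$, the averages $\frac1N\sum_{i\leq N}\iota_t(re_i)$ converge weakly to an $\OO(E)$-invariant vector of squared norm $r^{2t}(1-2^{t-1})>0$. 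Hence no argument of the form ``radial invariance plus homogeneity rules out summands'' can succeed; affine irreducibility holds for subtler reasons involving how the translation cocycle interacts with these invariant vectors. The paper does not prove it intrinsically either: it restricts to the finite\hyph{}dimensional subgroups $\Isom(\HH^n)$, identifies the irreducible pieces with the representations $\ro^n_t$ classified in~\cite{Monod-Py}, and imports the required totality statement from Lemma~2.2 and Proposition~2.4 of that paper. As it stands, your proposal establishes the second and third bullets but not the first, and some genuinely new input is needed there.
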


Before turning to our classification theorem, we recast the discussion into a more suggestive geometric context by switching to the viewpoint of Lobachevsky spaces. Recall that there exists three space forms: Euclidean, spherical and real hyperbolic. In infinite dimensions, this corresponds to Hilbert spaces, Hilbert spheres and the infinite\hyph{}dimensional real hyperbolic space $\HHI$. (For definiteness, we take all our spaces to be separable in this introduction.)

The hyperbolic Riemannian metric of curvature $-1$ induces a distance $d$ on $\HHI$ and we consider the corresponding (Polish) isometry group  $\Isomi$, which also acts on  the boundary at infinity $\partial \HHI$. After choosing a point in $\partial \HHI$, we can, just like in the finite\hyph{}dimensional case, identify $\partial \HHI$ with the sphere $\widehat E$ in a way that induces an isomorphism
$$\Isomi\cong \Mob(E).$$
As will be recalled in Section~\ref{sec:prelim}, there is a  linear model for $\Isomi$, and hence the usual notion of irreducibility --- which happens to coincide with a natural geometric notion. Here is the geometric, and more precise, counterpart to Theorem~\ref{thm:Mobnew}.

\begin{customthm}{\ref{thm:Mobnew}\textsuperscript{bis}}\label{thm:exist}
For every $0<t\leq 1$ there exists a continuous irreducible self\hyph{}representation
$$\ro^\infty_t\colon \Isomi\longrightarrow \Isomi$$
and a $\ro^\infty_t$-equivariant embedding
$$f^\infty_t\colon  \HHI \longrightarrow \HHI$$
such that for all $x,y \in \HHI$ we have
\begin{equation}\label{eq:exist}
\cosh d\big(f^\infty_t(x), f^\infty_t(y)\big)  = \big(\cosh d(x, y)\big)^t.
\end{equation}
%
\end{customthm}

Intuitively, the representations $\ro^\infty_t$ are limiting objects for the representations
$$\ro^n_t\colon \Isom(\HH^n)\longrightarrow \Isomi$$
of the finite\hyph{}dimensional $\Isom(\HH^n)$ that we studied in~\cite{Monod-Py}, although the latter do not have a simple explicit expression like~\eqref{eq:exist} for the distance. Using this relation to $\ro^n_t$ and our earlier results, we can show that these $\ro^\infty_t$, which are unique up to conjugacy for each $t$, exhaust all possible irreducible self\hyph{}representations.

\begin{imain}\label{thm:uniqueness}
Every irreducible self\hyph{}representation of $\Isomi$ is conjugated to a unique representation $\ro^\infty_t$ as in Theorem~\ref{thm:exist} for some $0<t\leq 1$.
\end{imain}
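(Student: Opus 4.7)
The plan is to reconstruct an arbitrary irreducible continuous self\hyph{}representation $\pi\colon\Isomi\to\Isomi$ from its restrictions to an exhausting chain of finite\hyph{}dimensional isometry subgroups, and to identify those restrictions via the classification of irreducible representations $\Isom(\HH^n)\to\Isomi$ from \cite{Monod-Py}. A common parameter $t\in(0,1]$ will emerge for all the restrictions, and the full $\pi$ will then be recovered by a limiting argument.

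Fix a basepoint $o\in\HHI$ and a nested sequence $\HH^1\subset\HH^2\subset\cdots\subset\HHI$ of totally geodesic subspaces through $o$ with dense union, yielding inclusions $\Isom(\HH^1)\subset\Isom(\HH^2)\subset\cdots\subset\Isomi$. For $n$ sufficiently large, the restriction $\pi_n:=\pi|_{\Isom(\HH^n)}$ admits no global fixed point in $\HHI\cup\partial\HHI$---otherwise, the decreasing family of fixed\hyph{}point sets for $\pi_n$ would be stabilised by a dense subgroup of $\Isomi$, producing a proper $\pi(\Isomi)$\hyph{}invariant subspace and contradicting irreducibility of $\pi$. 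Choosing a minimal closed $\pi_n$\hyph{}invariant totally geodesic subspace $Y_n\subset\HHI$, the induced irreducible representation of $\Isom(\HH^n)$ on $Y_n$ is, by \cite{Monod-Py}, conjugate to $\ro^n_{t_n}$ for some $t_n\in(0,1]$, with an associated $\pi_n$\hyph{}equivariant embedding $f_n\colon\HH^n\hookrightarrow\HHI$ satisfying $\cosh d(f_n(x),f_n(y))=(\cosh d(x,y))^{t_n}$.

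The parameter $t_n$ is stable in $n$: the restriction of $\ro^{n+1}_{t_{n+1}}$ to $\Isom(\HH^n)\subset\Isom(\HH^{n+1})$ preserves the geodesic subspace $f_{n+1}(\HH^n)$ and acts there as a copy of $\ro^n_{t_{n+1}}$, since the distance formula transfers verbatim to that subspace; comparison with the definition of $t_n$ and injectivity of $t\mapsto(\cosh r)^t$ force $t_n=t_{n+1}$. Write $t$ for the common value. After conjugating $\pi$ once by a suitable element of $\Isomi$, the embeddings $f_n$ can be chosen inductively so that $f_{n+1}|_{\HH^n}=f_n$, using at each stage the pointwise stabiliser of $f_n(\HH^n)$ in $\Isomi$ to absorb the residual ambiguity from \cite{Monod-Py}. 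The union $\bigcup_n f_n(\HH^n)$ is then dense in a closed totally geodesic subspace $Y_\infty\subset\HHI$ carrying a $\pi$\hyph{}equivariant isometric embedding $f^\infty\colon\HHI\to Y_\infty$ satisfying \eqref{eq:exist}. Irreducibility of $\pi$ forces $Y_\infty=\HHI$, so $\pi$ is conjugate to $\ro^\infty_t$; uniqueness of $t$ is immediate from \eqref{eq:exist}.

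The main obstacle is the coherent matching step: the embedding furnished by \cite{Monod-Py} is determined only modulo the pointwise stabiliser of its image, and one must verify that these residual ambiguities can be resolved consistently across all $n$ so that the sequence $f_n$ actually glues. A secondary issue is the initial fixed\hyph{}point analysis needed to reach the non\hyph{}degenerate regime $Y_n\neq\varnothing$, where one must exclude both parabolic behaviour on $\partial\HHI$ and genuine global fixed points of $\pi_n$ for small $n$; this is handled by working with $n$ large and exploiting irreducibility of the ambient $\pi$.
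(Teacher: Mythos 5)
Your proposal founders on a point the paper is at pains to warn about in its introduction: the finite\hyph{}dimensional representations $\ro^n_t$ of $\Isom(\HH^n)$ do \emph{not} come with an equivariant map $f_n\colon\HH^n\to\HHI$ satisfying $\cosh d(f_n(x),f_n(y))=(\cosh d(x,y))^{t}$. That clean power formula holds only in the infinite\hyph{}dimensional limit; for finite $n$ the correct expression is the integral $\int_{\bS^{n-1}}(\cosh u - b_1\sinh u)^{-(n-1+t)}\,db$ from~\cite{Monod-Py}, which converges to $(\cosh u)^t$ only as $n\to\infty$ by concentration of measure --- this is precisely the content of the proof of Theorem~\ref{thm:power}. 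Consequently your gluing scheme collapses: the image $f^{n+1}_t(\HH^n)$ is not a totally geodesic subspace (the $f^n_t$ are harmonic maps whose images are minimal but not totally geodesic), the restriction of $\ro^{n+1}_t$ to $\Isom(\HH^n)$ is not irreducible (a new positive\hyph{}definite summand appears at each stage), and $f^{n+1}_t|_{\HH^n}$ lies at finite but nonzero distance from $f^n_t$, so no conjugation can arrange $f_{n+1}|_{\HH^n}=f_n$. The ``main obstacle'' you flag (resolving the ambiguity modulo pointwise stabilisers) is therefore not the real issue: the maps genuinely fail to be nested, independently of any such ambiguity.

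The paper circumvents this by never attempting to glue maps or spaces. It pins down the parameter via translation lengths ($\ell(\ro(g))=t\,\ell(g)$ on each $\Isom(\HH^n)$, with $t$ automatically independent of $n$ since the subgroups are nested), takes the point $p$ fixed by $\ro(\OO)$, projects it to the minimal $\Isom(\HH^n)$\hyph{}invariant subspaces to obtain points $p_n\to p$, and then computes $\lim_n \cosh d(\ro(g)p_n,\ro(h)p_n)$ as a pointwise limit of kernels using the explicit integral formula and concentration of measure. This yields identity~\eqref{eq:exunic} directly, and Theorem~\ref{thm:KHT-char} then supplies the conjugacy to $\ro^\infty_t$. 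Two further gaps in your write\hyph{}up: you assume continuity of $\pi$ from the outset, whereas Theorem~\ref{thm:uniqueness} is stated without it and the paper must invoke the automatic continuity of Theorem~\ref{thm:auto}; and your one\hyph{}line argument that the restrictions are non\hyph{}elementary glosses over the actual difficulty (the paper's Proposition~\ref{prop:restriction} needs the Cartan\hyph{}like decomposition, perfectness of $\Isom(\HH^n)^\circ$, and the uniqueness of the fixed point of a parabolic element to propagate a putative fixed point at infinity to all of $\Isomi$).
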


Even though we stated above that $\ro^\infty_t$ is, in some sense, a limit of representations $\ro^n_t$ for $\Isom(\HH^n)$ as $n$ goes to infinity, there is a priori a difficulty in making this precise. Consider indeed a standard nested sequence of totally geodesic subspaces
$$\HH^n \se  \HH^{n+1} \se  \ldots \se \HH^m \se\ldots\se \HHI$$
with dense union. For $m\ge n$, we can restrict $\ro^m_t$ to a copy of $\Isom(\HH^n)$. After passing to the irreducible component, this creates a copy of $\ro^n_t$. For each new $m\geq n$, there is a new positive definite component that needs to be taken into account. Likewise, the associated harmonic map $f^m_t\colon \HH^m\to\HHI$ from~\cite{Monod-Py}, when restricted to $\HH^n$, does not coincide with $f^n_t$ and indeed is not harmonic on $\HH^n$; rather, it lies at finite distance from $f^n_t$.

Our strategy for eliminating all these difficulties is not to work with the groups, and not to work with the spaces either. Instead, we only keep track of the various distance functions and study their pointwise convergence. We then show that the spaces and groups can be reconstructed from this data after this easier limit has been established. This limiting behaviour of the distance functions is a basic instance of the phenomenon of concentration of measure on spheres.

This strategy is very much the same as the one behind the use of kernels of positive type to construct orthogonal representations, and of kernels of conditionally negative type to construct affine isometric actions. Specifically, we use the notion of kernels of \emph{hyperbolic type} and establish the necessary reconstruction results.


\subsection{Further considerations}
The reader will have noticed that there is no continuity assumption in Theorem~\ref{thm:uniqueness}, although the representations constructed in Theorem~\ref{thm:exist} are continuous. Indeed this formulation of Theorem~\ref{thm:uniqueness} necessitates the following result, which leverages the automatic continuity proved by Tsankov in~\cite{Tsankov13} for the orthogonal group.

\begin{imain}\label{thm:auto}
Every irreducible self\hyph{}representation of $\Isomi$ is continuous.
\end{imain}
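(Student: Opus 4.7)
The plan is to leverage Tsankov's automatic continuity on a point stabilizer $K\cong\OO(E)$ and propagate continuity to all of $\Isomi$ using the \cat{-1} geometry of $\HHI$ together with a Cartan decomposition. Fix a basepoint $o\in\HHI$ and set $K=\Isomi_o$, topologically isomorphic to $\OO(E)$ for $E$ an infinite\hyph{}dimensional separable real Hilbert space; the ambient group $\Isomi$ is Polish with the topology of pointwise convergence on $\HHI$. Tsankov's automatic continuity theorem \cite{Tsankov13} then forces $\rho|_K\colon K\to\Isomi$ to be continuous. Since $\OO(E)$ enjoys property (OB), the image $\rho(K)$ has bounded orbits on $\HHI$, and uniqueness of circumcenters in the \cat{-1} space $\HHI$ produces a fixed point $o'\in\HHI$ for $\rho(K)$; hence $\rho(K)\subseteq K':=\Isomi_{o'}$.

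To treat the noncompact directions, invoke the Cartan decomposition $\Isomi=KAK$, where $A=(a_s)_{s\in\R}$ is the one\hyph{}parameter group of translations along a fixed geodesic through $o$, and the parameter $s(g)=d(o,go)$ is continuous on $\Isomi$. Near the identity, this decomposition admits a Borel section $g\mapsto(k(g),s(g),k'(g))$ with $k(g),k'(g)\to\Id$ as $g\to\Id$. Combined with continuity of $\rho|_K$, this reduces continuity of $\rho$ at the identity --- and hence everywhere, by the homomorphism property --- to continuity of the one-parameter subgroup $s\mapsto\rho(a_s)$ at $s=0$.

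For the latter, the centralizer $M=Z_K(A)$, itself isomorphic to the orthogonal group of the hyperplane in $E$ perpendicular to the geodesic direction, commutes with $A$, so $\rho(M)$ commutes with $\rho(A)$. The fixed-point set $F\subseteq\HHI$ of $\rho(M)$ is a closed totally geodesic subspace containing $o'$ and preserved by $\rho(A)$; each orbit point $\rho(a_s)o'$ also lies in $F$, since $\rho(M)$ centralizes $\rho(a_s)$ and fixes $o'$. Combining irreducibility of $\rho$ with \cat{-1} rigidity should force $F$ to be a geodesic line on which $\rho(a_s)$ acts by translation of additive displacement $\psi(s)$. Borel measurability of $\psi$, together with the classical facts that measurable additive functions $\R\to\R$ are linear and that measurable one\hyph{}parameter subgroups of Polish groups are continuous, then yields continuity of $\rho|_A$ and thus of $\rho$.

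The main obstacle is this last step: constraining $F$ to be one\hyph{}dimensional and obtaining Borel measurability of $\psi$. I expect both to require a careful interplay of irreducibility of $\rho$, property (OB) for orthogonal subgroups of $K$, and Pettis-type automatic measurability arguments for Polish group homomorphisms.
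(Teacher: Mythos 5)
Your opening moves coincide with the paper's: apply Tsankov's theorem to the point stabiliser $K\cong\OO$, use boundedness of $\rho(K)$-orbits (Ricard--Rosendal) and the Cartan fixed-point theorem to produce a fixed point $o'$, and then try to propagate continuity to the rest of the group. The divergence --- and the gap --- is in how you handle the ``noncompact direction.'' Two problems. First, the claim that the Cartan decomposition $g=k(g)a_{s(g)}k'(g)$ admits a section with $k(g),k'(g)\to\Id$ as $g\to\Id$ is false (already in $\Isom(\HH^2)$ the $K$-parts encode the direction of displacement and do not converge); this is repairable, since $\rho(k_n),\rho(k'_n)$ fix $o'$ and so drop out of $d(\rho(g_n)o',o')$ anyway. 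Second, and fatally for your route, the final step cannot work as proposed: for the representations $\ro^\infty_t$ that this theorem must cover, one has $\cosh d(\rho(a_s)o',o')=\cosh^t(s)$, so the displacement $\psi(s)=\arcosh(\cosh^t s)$ is \emph{not} additive in $s$ (it behaves like $\sqrt{t}\,s$ near $0$ and like $t s$ at infinity), the points $o'$, $\rho(a_s)o'$, $\rho(a_{2s})o'$ are not aligned, and hence the fixed-point set $F$ of $\rho(M)$, which contains the whole $\rho(A)$-orbit of $o'$, is not a geodesic line. So there is no additive cocycle to which one could apply the ``measurable additive implies linear'' argument, and you are left with no mechanism for proving continuity of $s\mapsto\rho(a_s)$ --- which is the entire remaining content of the theorem.

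The paper avoids ever having to prove continuity along a one-parameter subgroup. The key is Proposition~\ref{prop:auto}: for any fixed $g\notin\OO$ and any identity neighbourhood $U\se\OO$, the set $\OO g\inv U g\OO$ is a neighbourhood of $\OO$ in $\Isomi$ (proved by a connectedness argument with a circle $\SO(2)$ moving $gp$, plus transitivity of $\OO$ on spheres). Thus any $g_n\to e$ can be written $g_n=k_n\,g\inv u_n g\,k'_n$ with $u_n\to e$ in $\OO$, whence $d(\rho(g_n)p,p)=d(\rho(u_n)\rho(g)p,\rho(g)p)\to 0$ by Tsankov's theorem applied once more to the $\OO$-action --- every small displacement is realised by conjugating a small orthogonal element. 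Continuity of the single function $D(g)=d(\rho(g)p,p)$ then upgrades to continuity of $\rho$ via the kernel machinery (Theorem~\ref{thm:KHT-char} and Corollary~\ref{cor:KHT-group}), using that irreducibility makes the orbit of $p$ hyperbolically total; this last upgrade is also left implicit in your sketch and does require the totality of the orbit. If you want to salvage your outline, replace the $KAK$ step by this conjugation trick; the centraliser/additivity argument should be abandoned.
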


Such an automatic continuity phenomenon can fail in finite dimensions due to the fact that algebraic groups admit discontinuous endomorphisms induced by wild field endomorphisms (see Lebesgue~\cite[p.~533]{Lebesgue07} and~\cite{Kestelman51}).

\smallskip
We do not know whether $\Isomi$ enjoys the full strength of the automatic continuity established by Tsankov for the orthogonal group.

\begin{iprob*}
Is every homomorphism from $\Isomi$ to any separable topological group continuous?
\end{iprob*}

In another direction, the existence of interesting self\hyph{}representation of $\Isomi$ raises the possibility of composing them. The geometric description of Theorem~\ref{thm:exist} suggests that the composition of $\ro^\infty_t$ with $\ro^\infty_s$ should be related to $\ro^\infty_{ts}$. Indeed, the considerations of Section~\ref{defrot} show readily that $\ro^\infty_t\circ \ro^\infty_s$ remains \emph{non-elementary} in the sense recalled below and hence admits a unique irreducible sub-representation necessarily isomorphic to $\ro^\infty_{ts}$.

In other words, Theorem~\ref{thm:uniqueness} implies that upon co-restricting to the unique irreducible part, the semi-group of irreducible self\hyph{}representations modulo conjugation is isomorphic to the multiplicative semi-group $(0,1]$. We could not ascertain that it is really necessary to extract the irreducible part. 

\begin{iprob*}
Is the composition of two irreducible self\hyph{}representations of $\Isomi$ still irreducible?
\end{iprob*}

The last section of this article adds a few elements to the study in~\cite{Monod-Py} of the representations $\ro^n_t$ of $\Isom(\HH^n)$ for $n$ \emph{finite}. 

\smallskip
\setcounter{tocdepth}{1}
\tableofcontents

\smallskip

\section{Notation and preliminaries}
\label{sec:prelim}
\subsection{Minkowski spaces}
\label{sec:Minkowski}
We work throughout over the field $\R$ of real numbers. The scalar products and norms of the various Hilbert spaces introduced below will all be denoted by ${\langle\cdot, \cdot\rangle}$ and ${\|\cdot\|}$. Given a Hilbert space $\sH$, we consider the Minkowski space $\R\oplus \sH$ endowed with the bilinear form $B$ defined by
$$B(s\oplus h, s'\oplus h') = s s' - \langle h,h'\rangle.$$
This is a ``strongly non-degenerate bilinear form of index one'' in the sense of~\cite{Burger-Iozzi-Monod}, to which we refer for more background. We endow $\R\oplus \sH$ with the topology coming from the norms of its factors. The adverb ``strongly'' refers to the fact that the corresponding uniform structure is \emph{complete}.

Given a cardinal $\alpha$, the hyperbolic space $\HH^\alpha$ can be realised as the upper hyperboloid sheet
$$\HH^\alpha=\big\{x=s\oplus h \in \R\oplus \sH : B(x,x) = 1 \text{ and } s > 0\big\},$$
where $\sH$ is a Hilbert space of Hilbert dimension $\alpha$. The visual boundary $\partial\HH^\alpha$ can be identified with the space of $B$-isotropic lines in $\R\oplus \sH$. We simply write $\HHI$ for our main case of interest, namely the separable infinite\hyph{}dimensional Lobachevsky space $\HHI=\HH^{\aleph_0}$.

The distance function associated to the hyperbolic metric is characterised by
$$\cosh d(x,y) = B(x,y)$$
and is therefore compatible with the ambient topology and complete. The group $\OO(B)$ of invertible linear operators preserving $B$ acts projectively on $\HH^\alpha$, inducing an isomorphism $\PO(B)\cong\Isom(\HH^\alpha)$. Alternatively, $\Isom(\HH^\alpha)$ is isomorphic to the subgroup of index two $\OO_+(B)<\OO(B)$ which preserves the upper sheet $\HH^\alpha$. See Proposition~3.4 in~\cite{Burger-Iozzi-Monod}.

An isometric action on $\HH^\alpha$ is called \emph{elementary} if it fixes a point in $\HH^\alpha$ or in $\partial\HH^\alpha$, or if it preserves a line. Any non-elementary action preserves a \emph{unique} minimal hyperbolic subspace and $\HH^\alpha$ is itself this minimal subspace if and only if the associated linear representation is irreducible~\cite[\S4]{Burger-Iozzi-Monod}.

\subsection{Second model}\label{sec:model}
It is often convenient to use another model for the Minkowski space $\R\oplus \sH$, as follows. Suppose given two points at infinity, represented by isotropic vectors $\xi_1, \xi_2$ such that $B(\xi_1, \xi_2)=1$. Define $E$ to be the $B$-orthogonal complement $\{\xi_1, \xi_2\}^\perp$. Then $-B$ induces a Hilbert space structure on $E$. We can now identify $(\R\oplus \sH, B)$ with the space ${\R^2\oplus E}$ endowed with the bilinear form $\bx$ defined by
$$\bx\big((s_1,s_2)\oplus v, (s_1', s_2')\oplus v'\big)= s_1 s_2' + s_2 s_1' - \langle v,v'\rangle$$
in such a way that the isomorphism takes $\xi_1, \xi_2$ to the canonical basis vectors of $\R^2$ (still denoted $\xi_i$) and that $\HH^\alpha$ is now realised as
$$\HH^\alpha=\big\{x=(s_1, s_2, v) : B'(x,x) = 1 \text{ and } s_1 > 0\big\},$$
noting that $s_1>0$ is equivalent to $s_2>0$ given the condition ${B'(x,x) = 1}$. We can further identify $\partial \HH^\alpha$ with $\widehat E=E\cup\{\infty\}$ by means of the following parametrisation by $B'$-isotropic vectors:
\begin{equation}\label{eq:para:bord}
v \longmapsto \big(\tfrac12 \|v\|^2,1\big)\oplus v,\kern10mm \infty \longmapsto \xi_1.
\end{equation}
In particular, $0\in E$ corresponds to $\xi_2$. This parametrisation intertwines the action of $\Isom(\HH^\alpha)$ with the M\"obius group of $E$. For instance, the Minkowski operator exchanging the coordinates of the $\R^2$ summand corresponds to the inversion in the sphere of radius~$\sqrt2$ around $0\in E$.

\subsection{Subspaces}\label{sec:subspace}
The hyperbolic subspaces of $\HH^\alpha$ are exactly all subsets of the form $\HH^\alpha \cap N$ where $N<\R\oplus \sH$ is a closed linear subspace of $\R\oplus\sH$. It is understood here that we accept the empty set, points and (bi-infinite) geodesic lines as hyperbolic subspaces.

\begin{defi}
The \emph{hyperbolic hull} of a subset of $\HH^\alpha$ is the intersection of all hyperbolic subspaces containing it. A subset is called \emph{hyperbolically total} if its hyperbolic hull is the whole ambient $\HH^\alpha$.
\end{defi}

Thus the hyperbolic hull of a subset $X\se \HH^\alpha$ coincides with $\HH^\alpha\cap\overline{\mathrm{span}}(X)$. It follows that $X$ is hyperbolically total if and only if it is total in the topological vector space $\R\oplus \sH$.


\medskip
There is a bijective correspondence, given by $\sH'\mapsto \HH^\alpha \cap (\R\oplus \sH')$, between Hilbert subspaces $\sH'< \sH$ and hyperbolic subspaces that contain the point $1\oplus 0$.

In the second model, $E'\mapsto \HH^\alpha \cap (\R^2\oplus E')$ is a bijective correspondence between Hilbert subspaces $E'<E$ and hyperbolic subspaces of $\HH^\alpha$ whose boundary contains both $\xi_1$ and $\xi_2$.

\subsection{Horospheres}
\label{sec:horo}
We can parametrise $\HH^\alpha$ by $\R\times E$ in the second model as follows. For $s\in \R$ and $v\in E$, define
$$\sigma_s(v) = \begin{pmatrix}
\frac 12 (e^s + e^{-s} \|v\|^2)\\
e^{-s}\\
e^{-s} v
\end{pmatrix}.$$
For any given $v$, the map $s\mapsto \sigma_s(v)$ is a geodesic line; its end for $s\to\infty$ is represented by $\xi_1$ and its other end $s\to-\infty$ by the isotropic vector $(\frac12 \|v\|^2,1)\oplus v$ of the parametrisation~\eqref{eq:para:bord} above. If on the other hand we fix $s$, then the map $\sigma_s$ is a parametrisation by $E$ of a horosphere $\sigma_s(E)$ based at $\xi_1$. Computing $B'(\sigma_s(u) ,\sigma_s(v))$, we find that the hyperbolic distance on this horosphere is given by
%
%
\begin{equation}\label{eq:horo:CNT}
\cosh d(\sigma_s(u),\sigma_s(v)) = 1 + \tfrac 12  e^{-2 s}\, \|u-v\|^2 \kern10mm(\forall\,u,v\in E).
\end{equation}
If $E'<E$ is a Hilbert subspace and $\HH'$ the corresponding hyperbolic subspace $\HH^\alpha \cap (\R^2\oplus E')$ considered in~\ref{sec:subspace}, then $\sigma_s(E')$ is $\sigma_s(E)\cap \HH'$ and coincides with a horosphere in $\HH'$ based at $\xi_1\in\partial\HH'$.

\section{Kernels of hyperbolic type}
\label{sec:kernels}
\subsection{The notion of kernel of hyperbolic type}
Kernels of positive and of conditionally negative type are classical tools for the study of embeddings into spherical and Euclidean spaces respectively (see e.g.\ Appendix~C in~\cite{Bekka-Harpe-Valette}). The fact that a similar notion is available for hyperbolic spaces seems to be well-known, see e.g~\S5 in~\cite{Gromov_conc}. We formalise it as follows.

\begin{defi}\label{def:KHT}
Given a set $X$, a function $\beta\colon X\times X\to\R$ is a \emph{kernel of hyperbolic type} if it is symmetric, non-negative, takes the constant value~$1$ on the diagonal, and satisfies
\begin{equation}\label{eq:KHT}
\sum_{i,j=1}^n c_i c_j \beta(x_i, x_j) \leq \Big(\sum_{k=1}^n c_k \beta(x_k, x_0)\Big)^2
\end{equation}
for all $n\in \N$, all $x_0, x_1, \ldots, x_n\in X$ and all $c_1, \ldots, c_n\in \R$.
\end{defi}

\begin{rems}\label{rems:KHT}
(i)~The case $n=1$ of~\eqref{eq:KHT} implies $\beta(x,y)\geq 1$ for all $x,y$ and $\beta\equiv 1$ is a trivial example.

(ii)~The set of kernels of hyperbolic type on $X$ is closed under pointwise limits.

(iii)~One can check that for every kernel $\psi$ of \emph{conditionally negative type}, the kernel $\beta=1+\psi$ is of hyperbolic type. However, the geometric interpretation established below
shows that this only provides examples that are in a sense degenerate; see Proposition~\ref{prop:horo:neg}.
\end{rems}

Just as in the spherical and Euclidean cases, the above definition is designed to encapsulate the following criterion.

\begin{prop}\label{prop:KHT}
Given a map $f$ from a non-empty set $X$ to a hyperbolic space, the function $\beta$ defined on $X\times X$ by $\beta(x,y)=\cosh d(f(x),f(y))$ is a kernel of hyperbolic type.

Conversely, any kernel of hyperbolic type arises from such a map $f$ that has a hyperbolically total image.
\end{prop}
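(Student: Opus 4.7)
The plan divides naturally into two directions. For the forward implication, I would work in the hyperboloid model of Section~\ref{sec:Minkowski}, embedding the target as $\HH^\alpha\subset \R\oplus\sH$ so that $\cosh d(x,y)=B(x,y)$. Symmetry, the constant value~$1$ on the diagonal, and non-negativity of $\beta(x,y)=B(f(x),f(y))$ are then immediate, the last using $B\geq 1$ on $\HH^\alpha$. For the inequality \eqref{eq:KHT}, I would set $w=\sum_{i=1}^n c_i f(x_i)$ and use the $B$-orthogonal splitting $\R f(x_0)\oplus f(x_0)^\perp$. Since $f(x_0)$ is a positive vector on the hyperboloid, its $B$-orthogonal complement is negative definite (this is the signature $(1,\alpha)$ statement), which gives $B(w,w)\leq B(w,f(x_0))^2$, i.e.\ exactly \eqref{eq:KHT}.

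For the converse, my plan is a reconstruction in the spirit of the GNS construction used for kernels of positive or conditionally negative type. Fix a base point $x_0\in X$, let $V=\R[X]$ be the free real vector space on $X$ with basis $\{\delta_x\}_{x\in X}$, and define a symmetric bilinear form $B$ on $V$ by $B(\delta_x,\delta_y)=\beta(x,y)$ extended bilinearly. Writing $v_0=\delta_{x_0}$, we have $B(v_0,v_0)=1$, so $V=\R v_0\oplus v_0^\perp$. With $v=\sum c_i\delta_{x_i}$ the inequality \eqref{eq:KHT} translates into $B(v,v)\leq B(v,v_0)^2$; restricted to $v\in v_0^\perp$ this reads $B(v,v)\leq 0$, so $-B$ is positive semi-definite on $v_0^\perp$.

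The next step is to quotient $v_0^\perp$ by the nullspace of $-B|_{v_0^\perp}$ and complete, obtaining a real Hilbert space $\sH$ into which $V$ embeds densely, giving a Minkowski space $\R\oplus\sH$ in the sense of Section~\ref{sec:Minkowski}. Decomposing $\delta_x=\beta(x,x_0)\,v_0+v'_x$ with $v'_x\in v_0^\perp$, the identity $\beta(x,x)=1$ forces $B(v'_x,v'_x)=1-\beta(x,x_0)^2$, so the image $f(x)\in \R\oplus\sH$ has first coordinate $\beta(x,x_0)\geq 1>0$ (by Remark~\ref{rems:KHT}(i)) and satisfies $B(f(x),f(x))=1$. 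Hence $f(x)\in\HH^\alpha$ and $\cosh d(f(x),f(y))=B(f(x),f(y))=\beta(x,y)$ by construction. Since $V$ is dense in $\R\oplus\sH$, the family $\{f(x)\}_{x\in X}$ is total in the Minkowski space, so by Section~\ref{sec:subspace} the image of $f$ is hyperbolically total.

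The main obstacle I anticipate is a small piece of linear algebra: verifying that quotienting by the null cone of $-B|_{v_0^\perp}$ produces a genuine inner product space. This requires the Cauchy--Schwarz inequality for the positive semi-definite form $-B|_{v_0^\perp}$ to identify $\{v:B(v,v)=0\}$ with the full radical $\{v:B(v,\cdot)\equiv 0\}$, so that the induced form on the quotient is both well-defined and positive definite. A secondary minor concern is that the construction depends on the choice of $x_0$; but since the statement only asserts existence of $f$, this need not be addressed.
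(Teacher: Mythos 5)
Your proposal is correct and follows essentially the same route as the paper: the forward direction is the reverse Schwarz inequality in the Minkowski space (which the paper also notes can be checked via the signature decomposition you use), and your converse is the paper's GNS construction in disguise, since $-B(v'_x,v'_y)=\beta(x,x_0)\beta(y,x_0)-\beta(x,y)$ is exactly the positive-type kernel $N$ to which the paper applies GNS. The one point you flag as an obstacle (Cauchy--Schwarz for the semi-definite form on $v_0^\perp$) is precisely what the standard GNS construction handles, so there is no gap.
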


A straightforward way to establish Proposition~\ref{prop:KHT} is to use the relation with kernels of positive type, as exposed in Section~\ref{sec:KHT-KPT}. However, this is unsatisfactory for one very important aspect, namely the question of naturality. In particular, how do transformations of $X$ correspond to isometries of the hyperbolic space?

Indeed an important difference between the above definition of kernels of hyperbolic type and the classical spherical and Euclidean cases is that~\eqref{eq:KHT} is asymmetric. An additional argument allows us to answer this question  --- as follows.

\begin{main}\label{thm:KHT-char}
Let $X$ be a non-empty set with a kernel of hyperbolic type $\beta$. Then the space $\HH^\alpha$ and the map $f\colon X\to \HH^\alpha$ granted by Proposition~\ref{prop:KHT} are unique up to a unique isometry of hyperbolic spaces.

Therefore, denoting by $\Aut(X,\beta)$ the group of bijections of $X$ that preserve $\beta$, there is a canonical representation $\Aut(X,\beta) \to \Isom(\HH^\alpha)$ for which $f$ is equivariant.
\end{main}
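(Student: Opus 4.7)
The plan is to realise the desired isometry at the level of the ambient Minkowski spaces and then to restrict it to the upper sheets. So suppose that two realisations $(\HH^\alpha, f)$ and $(\HH^{\alpha'}, f')$ of $\beta$ are given, sitting respectively in Minkowski spaces $(\R\oplus\sH, B)$ and $(\R\oplus\sH', B')$. First I would define a linear map $T$ on the span $V:=\mathrm{span}\,f(X)$ by the formula
$$T\Big(\sum_i c_i\, f(x_i)\Big) = \sum_i c_i\, f'(x_i).$$

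The critical well-definedness check is that $\sum_k e_k f(z_k)=0$ forces $\sum_k e_k f'(z_k)=0$, and this is the only place where I expect any subtlety. One proceeds via the intermediary $\beta$: pairing in $B$ with $f(y)$ gives $\sum_k e_k \beta(z_k,y)=0$ for every $y\in X$, which is exactly $B'$-orthogonality of $\sum_k e_k f'(z_k)$ to the hyperbolically total, hence linearly total (by~\S\ref{sec:subspace}), set $f'(X)$; non-degeneracy of $B'$ then yields the vanishing. The very same bilinear identity shows that $T$ preserves the forms on $V$, so by the completeness of the Minkowski uniform structure recalled in~\S\ref{sec:Minkowski} it extends uniquely to a continuous linear isometry $\wt T\colon (\R\oplus\sH,B)\to(\R\oplus\sH',B')$. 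Running the construction in the opposite direction produces a two-sided inverse of $\wt T$; in particular $\alpha=\alpha'$ and the two ambient spaces are canonically identified.

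Because $f(X)$ and $f'(X)$ lie in the upper sheets, and each upper sheet is a connected component of $\{B=1\}$ (respectively $\{B'=1\}$), $\wt T$ sends the upper sheet to the upper sheet and thereby induces an isometry $\phi\colon\HH^\alpha\to\HH^{\alpha'}$ with $\phi\circ f=f'$. Any competing isometry would coincide with $\phi$ on the hyperbolically total set $f(X)$ and hence on its closed linear span, i.e.\ everywhere, giving uniqueness of $\phi$.

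For the canonical representation, given $g\in\Aut(X,\beta)$ I would apply what has just been proved to the two realisations $(\HH^\alpha,f)$ and $(\HH^\alpha,f\circ g)$ of the same kernel $\beta$, obtaining a unique isometry $\varphi_g$ of $\HH^\alpha$ with $\varphi_g\circ f = f\circ g$. Uniqueness forces $\varphi_{gh}=\varphi_g\circ\varphi_h$ (since both solve the intertwining problem for $gh$) and $\varphi_e=\Id$, so $g\mapsto\varphi_g$ is a homomorphism, with the equivariance of $f$ built into its defining identity.
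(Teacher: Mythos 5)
Your argument is correct and is essentially the paper's own proof in a slightly more direct packaging: the paper factors through the quotient $\R[X]/R$ of the free vector space by the radical of the $\beta$-form and shows the kernel of the linearised $f$ equals that radical, whereas you map one realisation directly onto the other; the decisive steps --- detecting linear relations among the $f(x_i)$ via $\beta$ using totality and non-degeneracy, and extending to the completion via Proposition~\ref{completion} --- are the same in both versions.
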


A function $F\colon G \to \R$ on a group $G$ will be called a \emph{function of hyperbolic type} if the kernel $(g,h)\mapsto F(g\inv h)$ is of hyperbolic type. In other words, this is equivalent to the data of a left-invariant kernel of hyperbolic type on $G$. Therefore, the above results imply readily the following.

\begin{cor}\label{cor:KHT-group}
For every function of hyperbolic type $F\colon G \to \R$ there is an isometric $G$-action on a hyperbolic space and a point $p$ of that space such that
$$ F(g) = \cosh d(gp, p)$$
holds for all $g\in G$ and such that the orbit $G p$ is hyperbolically total.

If moreover $G$ is endowed with a group topology for which $F$ is continuous, then the $G$-action is continuous.
\end{cor}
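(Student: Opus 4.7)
The plan is to apply Theorem~\ref{thm:KHT-char} directly to $X = G$ equipped with the left-invariant kernel $\beta(g,h) := F(g^{-1}h)$, which is of hyperbolic type by the very definition of a function of hyperbolic type. First I would observe that left translations preserve $\beta$, since $\beta(kg, kh) = F(g^{-1}h) = \beta(g,h)$, so that left multiplication yields a homomorphism $G \to \Aut(G, \beta)$.

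Proposition~\ref{prop:KHT} then produces a hyperbolic space $\HH^\alpha$ and a map $f \colon G \to \HH^\alpha$ with hyperbolically total image satisfying $\cosh d(f(g), f(h)) = F(g^{-1}h)$, while Theorem~\ref{thm:KHT-char} supplies a canonical representation $\Aut(G, \beta) \to \Isom(\HH^\alpha)$ for which $f$ is equivariant. Composing with left multiplication gives the desired isometric $G$-action. Setting $p := f(e)$, equivariance yields $f(g) = gp$, and the distance formula becomes $F(g) = \beta(e,g) = \cosh d(p, gp)$; the orbit $Gp = f(G)$ is hyperbolically total by construction.

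For the continuity statement, assume $G$ carries a group topology making $F$ continuous. The key step is continuity of the orbit map $g \mapsto gp$: if $g_n \to g$, then $g^{-1}g_n \to e$ by joint continuity of the group operations, so $\cosh d(g_n p, gp) = F(g^{-1}g_n) \to F(e) = 1$, and hence $g_n p \to gp$ in $\HH^\alpha$ by completeness of $d$ and its compatibility with the ambient topology (Section~\ref{sec:Minkowski}). Continuity of $g \mapsto g q$ at any $q = \gamma p$ in the orbit follows analogously, once one notes that $\gamma^{-1}g_n \gamma \to \gamma^{-1} g \gamma$. A density argument — using that $Gp$ is topologically total in $\R \oplus \sH$ by hyperbolic totality, together with the fact that the $G$-action extends linearly to $\R \oplus \sH$ and preserves $B$ — then promotes pointwise continuity from the orbit $Gp$ to all of $\HH^\alpha$.

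I anticipate the main obstacle to lie precisely in this final step: elements of $\OO_+(B)$ are not uniformly bounded in the ambient Hilbert norm, so the usual equicontinuity argument for orthogonal representations does not apply verbatim, and one has to exploit the specific Minkowski geometry (for instance, the identification of $\HH^\alpha$ with a level set of $B$ and the fact that an isometry of $\HH^\alpha$ is determined by its values on any hyperbolically total set). The rest of the argument is a formal consequence of the reconstruction results \ref{prop:KHT} and \ref{thm:KHT-char}.
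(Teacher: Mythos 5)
Your construction of the action, the distance formula, and the hyperbolic totality of the orbit follow the paper's proof exactly: apply Theorem~\ref{thm:KHT-char} to $\beta(g,h)=F(g^{-1}h)$, view $G$ inside $\Aut(G,\beta)$ by left translation, and set $p=f(e)$. Your direct verification that $g\mapsto gp$ is continuous is likewise precisely what the paper's appeal to Remark~\ref{rem:GNS} amounts to in this situation.

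Where you go slightly astray is in diagnosing the ``final step''. The uniform equicontinuity the paper invokes is not equicontinuity of the operators $\ro(g)$ in the ambient Hilbert norm; it is equicontinuity of the maps $q\mapsto \ro(g)q$ on the metric space $(\HH^\alpha,d)$, which is automatic because each of them is a $d$-isometry, hence $1$-Lipschitz. That is what upgrades orbital continuity at every point to joint continuity, via $d(\ro(g_n)q_n,\ro(g)q)\le d(q_n,q)+d(\ro(g_n)q,\ro(g)q)$, and it also extends orbital continuity from any $d$-dense set of base points to its closure. The step that genuinely needs an argument is passing from continuity of $g\mapsto gq$ for $q$ in the orbit $Gp$ to all $q\in\HH^\alpha$, since hyperbolic totality only makes the \emph{linear span} of $Gp$ dense in $\R\oplus\sH$, not $Gp$ itself dense in $\HH^\alpha$. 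Your proposed fix --- that an isometry is determined by its values on a hyperbolically total set --- gives uniqueness, not continuity. The correct resolution is linear: if $g_n\to e$ then $\ro(g_n)w\to w$ for every $w$ in the span of $Gp$ (finite linear combinations), and the operator norms $\|\ro(g_n)\|$ on $\R\oplus\sH$ \emph{are} uniformly bounded along this sequence, because writing $x_0=1\oplus 0$ and $gv=s'\oplus h'$ one computes $\|gv\|^2=2B(v,g^{-1}x_0)^2-B(v,v)\le \big(2\|g^{-1}x_0\|^2+1\big)\|v\|^2$ with $\|g^{-1}x_0\|^2=2\cosh^2 d(gx_0,x_0)-1$, and $d(\ro(g_n)x_0,x_0)\le 2d(p,x_0)+d(\ro(g_n)p,p)$ stays bounded. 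Pointwise convergence on a dense subspace together with this uniform bound gives convergence on all of $\R\oplus\sH$, hence on $\HH^\alpha$. So your worry is legitimate but it concerns only this density step, not the equicontinuity step, and with the estimate above supplied your argument coincides with the paper's.
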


An interesting example is provided by the Picard--Manin space associated to the Cremona group:

\begin{exam}\label{ex:PM}
Let ${\rm Bir}(\mathbf{P}^{2})$ be the Cremona group and $\deg \colon {\rm Bir}(\mathbf{P}^{2}) \to \mathbf{N}$ be the degree function. That is, $\deg(g)\geq 1$ is the degree of the homogeneous polynomials defining $g\in {\rm Bir}(\mathbf{P}^{2})$. It follows from the work of Cantat~\cite{Cantat} that the function $\deg$ is of hyperbolic type, the associated hyperbolic space being the Picard--Manin space (see also Chap.~5 in~\cite{Manin}).
\end{exam}

\subsection{The geometric characterization of kernels of hyperbolic type}\label{sec:KHT-KPT}
In order to prove Proposition~\ref{prop:KHT}, we recall that a function $N\colon X\times X\to\R$ is called a (real) \emph{kernel of positive type} if it is symmetric and satisfies $\sum_{i,j} c_i c_j N(x_i, x_j)\geq 0$ for all $n\in \N$, all $x_1, \ldots, x_n\in X$ and all $c_1, \ldots, c_n\in \R$.

If $h\colon X\to \sH$ is any map to a (real) Hilbert space $\sH$, then $N(x,y)=\langle h(x), h(y)\rangle$ defines a kernel of positive type. Conversely, the \emph{GNS construction}
associates canonically to any kernel of positive type $N$ a Hilbert space $\sH$ and a map $h\colon X\to \sH$ such that $N(x,y)=\langle h(x), h(y)\rangle$ holds for all $x,y\in X$ and such that moreover $h(X)$ is total in $\sH$ (we refer again to Appendix~C in~\cite{Bekka-Harpe-Valette}).

Now the strategy is simply to identify the hyperboloid $\HH^\alpha$ in a Minkowski space $\R\oplus \sH$ with the Hilbert space factor $\sH$. This is the \emph{Gans model}~\cite{Gans66}; the drawback is that any naturality is lost.

\begin{proof}[Proof of Proposition~\ref{prop:KHT}]
We first verify that, given a map $f$ from $X$ to a hyperbolic space $\HH^\alpha$, the kernel defined by $\beta(x,y)=\cosh d(f(x),f(y))$ is indeed of hyperbolic type. In the ambient Minkowski space $\R\oplus \sH$ for $\HH^\alpha$, the reverse Schwarz inequality implies
$$B(v, v) \leq B(v, v_0)^2 \kern10mm \forall\, v, v_0\in  \R\oplus \sH \text{ with } B(v_0, v_0)=1.$$
This can also be verified directly by using the transitivity properties of $\OO(B)$ to reduce it to the case $v_0=1\oplus 0$, where it is trivial. Given now $x_0, x_1, \ldots, x_n\in X$ and $c_1, \ldots, c_n\in \R$, we apply this inequality to $v_0 = f(x_0)$ and $v=\sum_{k=1}^n c_k f(x_k)$ and the inequality~\eqref{eq:KHT} follows.

\medskip
We turn to the converse statement; let thus $\beta$ be an arbitrary kernel of hyperbolic type on $X$. Pick $x_0\in X$ and consider the kernel $N$ on $X$ defined by
\begin{equation}\label{eq:N}
N(x,y)  = \beta(x, x_0) \beta(y, x_0) - \beta(x, y).
\end{equation}
Condition~\eqref{eq:KHT} is precisely that $N$ is of positive type. Consider thus $h\colon X\to \sH$ as given by the GNS construction for $N$ and the corresponding hyperbolic space $\HH^\alpha$ in $\R\oplus\sH$. Define $f\colon X\to \HH^\alpha$ by
$$f(x) = \beta(x_0, x) \oplus h(x).$$
Using~\eqref{eq:N}, we obtain the desired relation $B(f(x),f(y))=\beta(x,y)$.

Finally, we prove that $f(X)$ is hyperbolically total; let thus $V\se \R\oplus\sH$ be the closed linear subspace spanned by $f(X)$ and recall that it suffices to show that $V$ is all of $\R\oplus\sH$. The definition of $N$ implies $N(x_0, x_0)=0$. Therefore, we have $h(x_0)=0$ and hence $V$ contains $1\oplus 0$. Thus $V=\R\oplus\sH$ follows from the fact that $h(X)$ is total in $\sH$.
\end{proof}

\begin{rem}\label{rem:GNS}
The construction of $f$ shows that when a topology is given on $X$, the map $f$ will be continuous as soon as the kernel $\beta$ is so. Indeed, the corresponding statement holds for kernels of positive type, see e.g.\ Theorem~C.1.4 in~\cite{Bekka-Harpe-Valette}.
\end{rem}

\subsection{Functoriality and kernels}
We now undertake the proof of Theorem~\ref{thm:KHT-char}. We keep the notations introduced in the proof of Proposition~\ref{prop:KHT} for the construction of the space $\HH^\alpha\se \R\oplus\sH$ and of the map $f\colon X\to \HH^\alpha$. In order to prove Theorem~\ref{thm:KHT-char}, it suffices to give another construction of $\R\oplus\sH$, of $B$ and of $f$ that depends functorially on $(X, \beta)$, and only on this.

\begin{rem}
The previous construction introduced a choice of $x_0$, and hence of $N$ in~\eqref{eq:N}, to define $f$. We now argue more functorially, but the price to pay is that the nature of the constructed bilinear form is unknown until we compare it to the non-functorial construction.
\end{rem}

We record the following fact, wherein $\overline{V}$ denotes the completion with respect to the uniform structure induced by the given non-degenerate quadratic form of finite index. The statement follows from the discussion in~\S2 of~\cite{Burger-Iozzi-Monod}, although it is not explicitly stated in this form.

\begin{prop}\label{completion}
Let $(V,Q)$ be a real vector space endowed with a non-degenerate quadratic form of finite index. Then there is a vector space $\overline{V}$ with a strongly non-degenerate quadratic form $\overline{Q}$ of finite index equal to that of $Q$ such that $V$ embeds densely in $\overline{V}$ with $\overline{Q}$ extending $Q$.  The quadratic space $(\overline{V},\overline{Q})$ is unique up to isometry; any isometry of $(V,Q)$ extends to an isometry of $(\overline{V},\overline{Q})$.\qed
\end{prop}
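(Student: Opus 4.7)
The plan is to reduce everything to a Hilbert-space completion applied to a Witt-style decomposition of $(V,Q)$. Let $p$ denote the finite index of $Q$ and pick a $p$-dimensional subspace $V_-\se V$ on which $Q$ is negative definite and which is maximal with this property. Non-degeneracy of $Q$ gives a $Q$-orthogonal splitting $V = V_-\oplus V_+$, where $V_+$ is the $Q$-orthogonal complement of $V_-$; maximality of $V_-$ forces $Q|_{V_+}$ to be positive definite, so $Q|_{V_+}$ endows $V_+$ with the structure of a (possibly non-complete) pre-Hilbert space.

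Next I would complete $V_+$ in the usual Hilbert sense to obtain $\overline{V_+}$ and set $\overline{V} := V_-\oplus\overline{V_+}$, with $\overline{Q}$ the orthogonal-sum form extending $Q$ (negative definite on the finite-dimensional $V_-$, positive definite on $\overline{V_+}$). Then $\overline{Q}$ is clearly non-degenerate of the same finite index $p$, and the uniform structure it induces, which by \S2 of \cite{Burger-Iozzi-Monod} is intrinsic and here coincides with the product of the Euclidean uniform structure on $V_-$ and the Hilbert uniform structure on $\overline{V_+}$, is complete. Hence $\overline{Q}$ is strongly non-degenerate in the sense of \cite{Burger-Iozzi-Monod}. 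Density of $V$ in $\overline{V}$ reduces to density of $V_+$ in $\overline{V_+}$.

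For the uniqueness and extension statements, the key point is that on $V$ the uniform structure coming from any strongly non-degenerate extension of $Q$ agrees with the intrinsic one coming from $Q$ itself. Hence if $(\overline{V}',\overline{Q}')$ is any second such completion, the identity on $V$ is uniformly continuous with respect to either ambient uniform structure and extends by density to a bijective linear map $\overline{V}\to\overline{V}'$ preserving the bilinear forms. The same pattern applied to a linear $Q$-isometry $g$ of $V$, which is automatically uniformly continuous for the intrinsic uniform structure, extends $g$ uniquely and isometrically to $\overline{V}$.

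The only real obstacle is confirming that the abstract notion of ``strongly non-degenerate'' in \cite{Burger-Iozzi-Monod} is indeed governed by an intrinsic uniform structure that admits this concrete description and in particular does not depend on the choice of the maximal negative subspace $V_-$; this is precisely what is set up in \S2 of loc.\ cit., after which all three assertions — existence, uniqueness, and functoriality with respect to isometries — are nothing more than the standard density and uniform-continuity arguments for Hilbert-space completions.
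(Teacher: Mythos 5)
Your argument is correct, and it is essentially the argument the paper has in mind: the paper offers no proof of its own for this proposition, deferring entirely to \S2 of \cite{Burger-Iozzi-Monod}, and what you have written is the standard construction that underlies that citation. Two small remarks. First, in the convention of the paper and of \cite{Burger-Iozzi-Monod} the index counts the \emph{positive} directions (the model form is $ss'-\langle h,h'\rangle$ with a one-dimensional positive part), so the roles of your $V_-$ and $V_+$ are swapped relative to the paper; this is immaterial since the whole argument is symmetric under $Q\mapsto -Q$. Second, the step ``maximality of $V_-$ forces $Q|_{V_+}$ to be positive definite'' deserves one extra line: maximality only gives positive \emph{semi}-definiteness on $V_+$ directly, and one then uses Cauchy--Schwarz for the semi-definite form to see that a null vector of $Q|_{V_+}$ would lie in the radical of $Q$ on all of $V$, contradicting non-degeneracy. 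The genuinely non-formal input --- that the uniform structure attached to a non-degenerate form of finite index is intrinsic, i.e.\ independent of the chosen maximal definite subspace, so that isometries of $(V,Q)$ are automatically uniformly continuous and the induced uniform structure on a dense subspace of a strongly non-degenerate space is the intrinsic one --- is exactly the content of \S2 of \cite{Burger-Iozzi-Monod} that the paper itself invokes, and you correctly identify it as the one point that cannot be dispatched by pure density arguments.
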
 

We now start our functorial construction. We extend $\beta$ to a symmetric bilinear form on the free vector space $\R[X]$ on $X$. We denote by $W_0$ the quotient of $\R[X]$ by the radical $R$ of this bilinear form and by $\wt B$ the symmetric bilinear form thus induced on $W_0$. We further denote by $\wt f$ the composition of the canonical maps $X\to \R[X]\to W_0$. In particular, we have
$$\beta(x,y) = \wt B\big(\wt f(x), \wt f(y)\big)$$
for all $x,y\in X$.

To finish the proof, it suffices to establish the following two claims. First, $W_0$ is non-degenerate of index $1$, and hence has a completion $W$ by Proposition~\ref{completion}. Secondly, there is an identification of $W$ with $\R\oplus\sH$ that intertwines $\wt f$ with $f$ and $\wt B$ with $B$, though we may of course now use $f$ to construct this identification.

In fact, we shall prove both claims at once by exhibiting an injective linear map $\iota\colon W_0 \to \R\oplus\sH$ such that
$$B\big(\iota(u), \iota(v)\big) = \wt B(u,v) \kern3mm \text{and}\kern3mm \iota(\wt f(x)) = f(x)$$
holds for all $u,v\in W_0$ and all $x\in X$; the latter property implies in particular that $\iota(W_0)$ is dense in $\R\oplus\sH$ since $f(X)$ is hyperbolically total, see Section~\ref{sec:subspace}. Although $\iota$ will be constructed using $f$, the first claim still holds because this construction implies in particular that $\wt B$ is a non-degenerate form of index one and that the completion $W$ coincides with $\R\oplus\sH$.

We turn to the construction of $\iota$. Extend $f$ by linearity to a map
$$\ol f\colon\R[X] \longrightarrow \R\oplus H.$$
Denote by $K$ the kernel of $\ol f$; by construction, $K$ is contained in the radical $R$. We now check that in fact $K=R$; let thus $\lambda = \sum_{x\in X} \lambda_x [x]$ be a finite formal linear combination of elements of $X$ and assume $\lambda\in R$. If $\ol f(\lambda)$ did not vanish, then $\ol f(\lambda)^\perp$ would be a proper subspace of $\R\oplus\sH$. However, this subspace always contains $f(X)$ since $\lambda\in R$, and thus we would contradict the fact that $f(X)$ is total in $\R\oplus\sH$. At this point, $\ol f$ induces a map $\iota$ with all the properties that we required.

This completes the proof of Theorem~\ref{thm:KHT-char}.\qed

\begin{proof}[Proof of Corollary~\ref{cor:KHT-group}]
We apply Theorem~\ref{thm:KHT-char} to the kernel $\beta$ defined on $G$ by $\beta(g,h)= F(g\inv h)$. Viewing $G$ as a subgroup of $\Aut(G,\beta)$, we obtain a homomorphism $\ro\colon G\to \Isom(\HH^\alpha)$ and a $\ro$-equivariant map $f\colon G\to \HH^\alpha$. This means that $f$ is the orbital map associated to the point $p=f(e)$. It only remains to justify the continuity claim. Since $f(G)=G p$ is hyperbolically total, the orbital continuity follows readily from the continuity of $f$, noted in Remark~\ref{rem:GNS}, because isometric actions are uniformly equicontinuous. The latter fact also implies that orbital continuity is equivalent to joint continuity for isometric actions.
\end{proof}

\subsection{Powers of kernels of hyperbolic type}
The fundamental building block for exotic self\hyph{}representations is provided by the following statement.

\begin{main}\label{thm:power}
If $\beta$ is a  kernel of hyperbolic type, then so is $\beta^t$ for all $0\leq t\leq 1$.
\end{main}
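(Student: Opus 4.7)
The plan is to reduce the statement to a geometric question about finite-dimensional hyperbolic spaces and then to build the required distance-distorting map by a pointwise-limit argument exploiting the concentration of measure advertised in the introduction.

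First, by Proposition~\ref{prop:KHT} I realise $\beta$ as $\beta(x,y) = \cosh d(f(x), f(y))$ for some $f \colon X \to \HH^\alpha$. Since all conditions of Definition~\ref{def:KHT} only involve finitely many points of $X$ at a time, it suffices to verify the conclusion on every finite tuple $x_0, \ldots, x_n$. The images $f(x_i)$ span a finite-dimensional non-degenerate Lorentzian subspace of the ambient Minkowski space: the ambient admits at most one positive direction within the span, and the $B$-orthogonal complement of any timelike unit inside the span is negative-definite, so the restricted form has signature $(1,k)$ for some finite $k$. Thus $f$ restricts to a map into some $\HH^m$ with $m$ finite, and it is enough to produce, for each such $m$ and each $0 < t \le 1$, a map $\phi_t^m \colon \HH^m \to \HH^\gamma$ satisfying
$$\cosh d\bigl(\phi_t^m(p), \phi_t^m(q)\bigr) = \bigl(\cosh d(p,q)\bigr)^t \quad \text{for all } p, q \in \HH^m;$$
then $\phi_t^m \circ f$ realises $\beta^t$ on the chosen tuple and the forward direction of Proposition~\ref{prop:KHT} delivers inequality~\eqref{eq:KHT}. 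Symmetry, non-negativity, the diagonal normalisation, and the boundary cases $t = 0$ and $t = 1$ are immediate.

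To build $\phi_t^m$ I would argue by pointwise limit: in view of Remark~\ref{rems:KHT}(ii) it suffices to exhibit a sequence of kernels $\beta_N$ on $\HH^m \times \HH^m$, each of hyperbolic type, converging pointwise to $(\cosh d)^t$. The natural candidates for the $\beta_N$ are pullbacks along maps into hyperbolic spaces whose dimension grows with $N$; the concentration of normalised geometric quantities on high-dimensional spheres, and on orbits of large stabiliser subgroups inside $\HH^N$, should extract $(\cosh d)^t$ as the limiting form of hyperbolic cosh-distance kernels assembled diagonally in $N$.

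The main obstacle is the construction of this approximating sequence $\beta_N$. The operation $\beta \mapsto \beta^t$ resists the most direct analytic tools: convex combinations of hyperbolic-type kernels are not themselves of hyperbolic type, because the defining inequality~\eqref{eq:KHT} is a reverse Cauchy--Schwarz and therefore closes in the wrong direction under averaging; and $\cosh d - 1$ is not in general a kernel of conditionally negative type on $\HH^\alpha$ (as one can check numerically already on three collinear points of $\HH^1$), so the Bernstein-function route via Remark~\ref{rems:KHT}(iii) only produces the degenerate horospheric sub-class. A successful construction therefore hinges on the concentration-of-measure input highlighted in the introduction, which constitutes the genuine analytic core of the theorem.
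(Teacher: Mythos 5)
Your reduction to finite-dimensional $\HH^m$, your appeal to closure under pointwise limits (Remark~\ref{rems:KHT}(ii)), and your intention to invoke concentration of measure all match the architecture of the paper's proof exactly. But the proposal stops precisely where the theorem's content begins: you never exhibit the approximating kernels $\beta_N$, and you say so yourself. As it stands, the argument is a correct frame around an empty picture, so it does not constitute a proof.

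The missing input is the family of representations $\ro^n_t\colon \Isom(\HH^n)\to\Isomi$ and their equivariant maps $f^n_t\colon\HH^n\to\HHI$ constructed in the authors' earlier work \cite{Monod-Py}. Fixing $m$ and embedding $\HH^m\se\HH^n$, the paper takes $\beta_n(x,y)=\cosh d\bigl(f^n_t(x),f^n_t(y)\bigr)$, which is of hyperbolic type by the easy direction of Proposition~\ref{prop:KHT}, and uses the explicit integral formula
\begin{equation*}
\beta_n(x,y)=\int_{\bS^{n-1}}\bigl(\cosh(u)-b_1\sinh(u)\bigr)^{-(n-1+t)}\,db,\qquad u=d(x,y),
\end{equation*}
established in \cite{Monod-Py}. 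The factor $\bigl(\cosh(u)-b_1\sinh(u)\bigr)^{-(n-1)}$ is the Jacobian of a M\"obius transformation $g_u$ of $\bS^{n-1}$, and the change of variables turns the integral into $\int_{\bS^{n-1}}\bigl(\cosh(u)+b_1\sinh(u)\bigr)^{t}\,db$, an integral of a fixed continuous function of the single coordinate $b_1$. Concentration of measure then forces convergence to the equatorial value $(\cosh u)^t$ as $n\to\infty$. Without some such explicit source of hyperbolic-type kernels with computable asymptotics --- or the alternative purely computational verification of inequality~\eqref{eq:KHT} alluded to after the theorem's statement --- the gap cannot be closed. Your negative observations (that averaging fails because \eqref{eq:KHT} is a reversed inequality, and that $\cosh d-1$ is not of conditionally negative type on $\HH^1$) are correct and correctly explain why no shortcut is available, but they do not substitute for the construction.
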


After we established this result, another proof was found, purely computational; it will be presented in~\cite{MonodFHT}.

\begin{proof}[Proof of Theorem~\ref{thm:power}]
We can assume $t>0$ since the constant function~$1$ satisfies Definition~\ref{def:KHT} trivially. In view of Proposition~\ref{prop:KHT}, it suffices to prove that for any hyperbolic space $\HH^\alpha$ with distance $d$, where $\alpha$ is an arbitrary cardinal, the kernel
$$(\cosh d)^t\colon \HH^\alpha\times \HH^\alpha \lra \R$$
is of hyperbolic type. Definition~\ref{def:KHT} considers finitely many points at a time, which are therefore contained in a finite\hyph{}dimensional hyperbolic subspace of $\HH^\alpha$ (see e.g.\ Remark~3.1 in~\cite{Burger-Iozzi-Monod}). For this reason, it suffices to prove the above statement for $\HH^m$ with $m\in \N$ arbitrarily large --- but fixed for the rest of this proof.

Given an integer $n\geq m$, we choose an isometric embedding $\HH^m\se \HH^n$ and consider the map
$$f^n_t\colon \HH^n \lra \HHI$$
that we provided in Theorem~C of~\cite{Monod-Py} (it was simply denoted by $f_t$ in that reference, but now we shall soon let $n$ vary). Consider the kernel
$$\beta_n\colon \HH^m \times \HH^m \lra \R, \kern10mm \beta_n(x,y) = \cosh d\big(f^n_t(x), f^n_t(y)\big)$$
obtained by restriction to $\HH^m\se \HH^n$; it is of hyperbolic type by Proposition~\ref{prop:KHT}. The proof will therefore be complete if we show that $\beta_n$ converges pointwise to $(\cosh d)^t$ on $\HH^m \times \HH^m$.

Choose thus $x,y\in \HH^m$. We computed an integral expression for the quantity $\beta_n(x,y)=\cosh d\big(f^n_t(x), f^n_t(y)\big)$ in \S3.B and \S3.C of~\cite{Monod-Py}. Namely, writing $u=d(x,y)$ we established
$$\beta_n(x,y) =\int_{\bS^{n-1}}\big(\cosh(u)-b_{1}\sinh(u)\big)^{-(n-1+t)} \, db$$
where $db$ denotes the integral against the normalized volume on the sphere $\bS^{n-1}$ and $b_1$ is the first coordinate of $b$ when $b$ is viewed as a unit vector in $\R^n$. We further recall (see~\cite[(3.vi)]{Monod-Py}) that
$$\big(\cosh(u)-b_{1}\sinh(u) \big)^{-(n-1)}$$
is the Jacobian of some hyperbolic transformation $g_u\inv$ of $\bS^{n-1}$. We can therefore apply the change of variable formula for $g_u$ and obtain
\begin{equation}\label{eq:I_u}
\begin{split}
\beta_n(x,y) &=\int_{\bS^{n-1}} \fhi(g_u b) \, db, \kern3mm \text{where}\\
\fhi(b) &=\big(\cosh(u)-b_{1}\sinh(u) \big)^{-t}.
\end{split}
\end{equation}
The transformation $g_u$ is given explicitly in~\cite{Monod-Py}, namely it is $g_{u}=g_{e^{u},0,\Id}$ as defined in \S2.A of~\cite{Monod-Py}. These formulas show that the first coordinate of $g_u b$ is
$$(g_u b)_1 = \frac{\sinh(u)+ b_1 \cosh(u)}{\cosh(u) + b_1 \sinh(u) }.$$
Entering this into~\eqref{eq:I_u}, we readily compute
$$\beta_n(x,y) =\int_{\bS^{n-1}} \big(\cosh(u)+b_{1}\sinh(u) \big)^{t}\, db.$$
We are thus integrating on $\bS^{n-1}$ a continuous function depending only upon the first variable $b_1$ and which is now independent of $n$. Therefore, when $n$ tends to infinity, the concentration of measure principle implies that this integral converges to the value of that function on the equator $\{b_1=0\}$. Since this equatorial value is $(\cosh(u))^t$, we have indeed proved that $\beta_n(x,y)$ converges to $(\cosh(d(x,y))^t$, as was to be shown.
\end{proof}

\section{On representations arising from kernels}
\subsection{General properties}
Let $G$ be a group and $F\colon G\to \R$ a function of hyperbolic type. According to Corollary~\ref{cor:KHT-group}, this gives rise to an isometric $G$-action on a hyperbolic space $\HH^\alpha$ together with a point $p\in \HH^\alpha$ whose orbit is hyperbolically total in $\HH^\alpha$ and such that
$$ F(g) = \cosh d(gp, p) \kern10mm (\forall g\in G).$$
We now investigate the relation between the geometric properties of this $G$-action and the properties of the function $F$.

The Cartan fixed-point theorem, in the generality presented e.g.\ in~\cite[II.2.8]{Bridson-Haefliger}, implies the following.

\begin{lem}\label{lem:Cartan}
The function $F$ is bounded if and only if $G$ fixes a point in $\HH^\alpha$.\qed
\end{lem}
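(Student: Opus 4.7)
The plan is to apply the Cartan fixed-point theorem as cited, in both directions, using the fact that $\HH^\alpha$ is a complete \cat{0} space (in fact \cat{-1}) and that $\cosh$ is a strictly increasing proper function on $[0,\infty)$.

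For the ``only if'' direction, I would observe that if $F$ is bounded, say $F(g) \leq M$ for all $g\in G$, then $d(gp,p) \leq \arcosh M$ for all $g$, so the orbit $Gp$ is bounded in $\HH^\alpha$. Since $\HH^\alpha$ is a complete \cat{0} space, the Cartan fixed-point theorem in the form of~\cite[II.2.8]{Bridson-Haefliger} (taking the circumcenter of the bounded orbit) yields a $G$-fixed point in $\HH^\alpha$.

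For the ``if'' direction, suppose $G$ fixes a point $q\in\HH^\alpha$. Then for every $g\in G$, the triangle inequality together with $gq=q$ gives
\[
d(gp,p) \leq d(gp,gq) + d(gq,p) = d(p,q) + d(q,p) = 2 d(p,q),
\]
so $F(g) = \cosh d(gp,p) \leq \cosh(2 d(p,q))$ is bounded uniformly in $g$.

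There is no real obstacle here; the content of the lemma is precisely the translation between boundedness of $F$ and boundedness of the orbit $Gp$, after which the Cartan fixed-point theorem does the work in one direction and the triangle inequality in the other.
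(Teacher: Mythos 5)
Your proof is correct and is exactly the argument the paper leaves implicit behind its \qed: boundedness of $F$ is equivalent to boundedness of the orbit $Gp$ via $\arcosh$, after which the Cartan fixed-point theorem~\cite[II.2.8]{Bridson-Haefliger} gives one direction and the triangle inequality the other. No differences to report.
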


Fixed points at infinity are a more subtle form of elementarity for the $G$-action; we begin with the following characterization for kernels.

\begin{prop}\label{prop:horo:neg}
Let $\beta$ be an unbounded kernel of hyperbolic type on a set $X$ and consider the map $f\colon X\to\HH^\alpha$ granted by Proposition~\ref{prop:KHT}.

Then $f(X)$ is contained in a horosphere if and only if $\beta-1$ is of conditionally negative type. 
\end{prop}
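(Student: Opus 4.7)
The plan is to exploit the explicit parametrisation of horospheres from~\eqref{eq:horo:CNT} together with the rigidity provided by Theorem~\ref{thm:KHT-char}. Set $\psi := \beta - 1$.

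One direction is a routine computation: if $f(X)$ lies in a horosphere $H$, I work in the second model of Section~\ref{sec:model} arranged so that $H$ is based at $\xi_1$ and therefore has the form $H = \sigma_s(E)$ for some $s \in \R$. Factoring $f = \sigma_s \circ h$ with $h\colon X \to E$ and plugging into~\eqref{eq:horo:CNT} immediately exhibits $\psi$ as the positive multiple $\tfrac12 e^{-2s}\|h(x)-h(y)\|^2$ of a squared Hilbertian distance, which is of conditionally negative type.

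For the converse I will reverse this construction and then identify the resulting picture with $f$ via uniqueness. The hypothesis, combined with Remark~\ref{rems:KHT}(i), gives that $\psi$ is a kernel of conditionally negative type that is symmetric and vanishes on the diagonal, so the classical GNS-type construction (\cite[App.~C]{Bekka-Harpe-Valette}) produces a Hilbert space $\sH'$ and a map $h\colon X \to \sH'$ with $\|h(x)-h(y)\|^2 = 2\psi(x,y)$. Taking $\sH'$ as the negative\hyph{}definite factor $E$ in the second model and setting $\tilde f(x) := \sigma_0(h(x))$ in the resulting $\HH^\alpha$, formula~\eqref{eq:horo:CNT} gives $\cosh d(\tilde f(x), \tilde f(y)) = \beta(x,y)$; thus $\tilde f$ realises the kernel $\beta$ and has image inside the horosphere $\sigma_0(\sH')$ based at $\xi_1$.

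The delicate step is the transfer back to $f$. The image $\tilde f(X)$ need not be hyperbolically total in the ambient $\HH^\alpha$, so I pass to its hyperbolic hull $\HH^\alpha \cap V$, where $V := \overline{\mathrm{span}}(\tilde f(X))$. This is where I expect the hypothesis that $\beta$ is unbounded to earn its keep: since unboundedness forces $\|h(x_n)\| \to \infty$ along some sequence, rescaling $\tilde f(x_n) = \bigl(\tfrac12 \|h(x_n)\|^2, 1, h(x_n)\bigr)$ by $\tfrac12\|h(x_n)\|^2$ produces a vector converging to $\xi_1$, so $\xi_1 \in V$ and hence $\xi_1 \in \partial(\HH^\alpha \cap V)$. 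Consequently $\sigma_0(\sH') \cap V$ is a horosphere of the hull based at $\xi_1$, and it contains $\tilde f(X)$. Inside this hull $\tilde f(X)$ is hyperbolically total by construction, so Theorem~\ref{thm:KHT-char} supplies a canonical isometry with the hyperbolic space associated to $\beta$ that intertwines $\tilde f$ with $f$; the image of this horosphere under that isometry is a horosphere of $\HH^\alpha$ containing $f(X)$.
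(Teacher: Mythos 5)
Your proposal is correct and follows essentially the same route as the paper: the forward direction reads $\beta-1$ off the horosphere formula~\eqref{eq:horo:CNT}, and the converse uses the affine GNS construction to build a realisation of $\beta$ inside a horosphere, passes to the hyperbolic hull (using unboundedness to place $\xi_1$ in its boundary), and invokes the uniqueness of Theorem~\ref{thm:KHT-char} to identify it with $f$. The only difference is cosmetic: you make explicit the rescaling argument showing $\xi_1$ lies in the closed span, which the paper leaves implicit (and note the first coordinate of $\sigma_0(v)$ is $\tfrac12(1+\|v\|^2)$, not $\tfrac12\|v\|^2$, though this does not affect the limit).
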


In particular we deduce the corresponding characterization for the $G$-actions.

\begin{cor}\label{cor:horo:neg}
Suppose $F$ unbounded. Then the orbit $G p$ is contained in a horosphere if and only if $F-1$ is of conditionally negative type.\qed
\end{cor}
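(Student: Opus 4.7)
The plan is to deduce the corollary directly from Proposition~\ref{prop:horo:neg} applied to the kernel $\beta$ on $X = G$ defined by $\beta(g,h) := F(g\inv h)$. By hypothesis $F$ is of hyperbolic type, so $\beta$ is a kernel of hyperbolic type on $G$; and $\beta$ is unbounded precisely when $F$ is unbounded.

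Next I would identify the map $f\colon G \to \HH^\alpha$ granted by Proposition~\ref{prop:KHT} with the orbital map $g \mapsto gp$ of the action constructed in Corollary~\ref{cor:KHT-group}. This identification is automatic from the uniqueness in Theorem~\ref{thm:KHT-char}: both $(\HH^\alpha, f)$ (from Proposition~\ref{prop:KHT}) and the hyperbolic space with orbital map $g\mapsto gp$ (from Corollary~\ref{cor:KHT-group}) yield hyperbolically total images and realise the same kernel, since $\cosh d(gp, p) = F(g) = \beta(g, e)$ and more generally $\cosh d(gp, hp) = F((g\inv h)) = \beta(g,h)$ by $G$-invariance of $d$. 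Hence $f(G) = Gp$.

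Now Proposition~\ref{prop:horo:neg} asserts that $f(G) = Gp$ is contained in a horosphere if and only if the kernel $\beta - 1$ on $G \times G$ is of conditionally negative type. The last step is to recognise that since $\beta - 1$ is left $G$-invariant, being a kernel of conditionally negative type on $G\times G$ is equivalent to $g\mapsto F(g) - 1$ being a function of conditionally negative type on $G$ in the usual sense (compare Appendix~C in~\cite{Bekka-Harpe-Valette}). Combining these equivalences completes the proof.

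There is no genuine obstacle: once Proposition~\ref{prop:horo:neg} is established, the corollary is a purely formal transcription to the group-invariant setting, the only point requiring a moment's care being the identification of $f$ with the orbital map via the uniqueness statement of Theorem~\ref{thm:KHT-char}.
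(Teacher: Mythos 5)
Your proposal is correct and follows exactly the route the paper intends: the paper states the corollary with a \qed because it regards it as an immediate consequence of Proposition~\ref{prop:horo:neg} applied to the left-invariant kernel $\beta(g,h)=F(g\inv h)$, with the orbital map identified via the uniqueness in Theorem~\ref{thm:KHT-char}. You have simply written out the formal transcription the authors left implicit, and every step checks out.
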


\begin{proof}[Proof of Proposition~\ref{prop:horo:neg}]
Suppose that $f(X)$ is contained in a horosphere. We can choose the model described in Section~\ref{sec:horo} in such a way that this horosphere is $\sigma_0(E)$ in the notations of that section. Therefore, the formula~\eqref{eq:horo:CNT} implies for all $x,y\in X$ the relation
$$\beta(x, y) = \cosh d(f(x), f(y))  = 1 +  \tfrac 12 \, \big\| \sigma_0\inv (f(x)) - \sigma_0\inv (f(y))\big \|^2,$$
where $\|\cdot\|$ is the norm of the Hilbert space $E$ parametrising the horosphere. This witnesses that $\beta-1$ is of conditionally negative type.

Conversely, if $\beta-1$ is of conditionally negative type, then the usual affine GNS construction (see e.g.\ {\S}C.2 in~\cite{Bekka-Harpe-Valette}) provides a Hilbert space $E'$ and a map $\eta\colon X\to E'$ such that $\eta(X)$ is total in $E'$ and such that
$$\beta(x,y) -1 = \tfrac 12 \, \| \eta(x) - \eta(y) \|^2$$
holds for all $x,y$. Now $\sigma_0\circ \eta$ is a map to a horosphere in the hyperbolic space $\HH'$ corresponding to $E'$ in the second model (Section~\ref{sec:model}), centered at $\xi_1\in\partial \HH'$. Let $\HH''\se\HH'$ be the hyperbolic hull of $\sigma_0\circ \eta(X)$ and observe that its boundary contains $\xi_1$ since $\beta$ is unbounded. Thus $\sigma_0\circ \eta(X)$ is contained in a horosphere of $\HH''$. By Theorem~\ref{thm:KHT-char}, $\sigma_0\circ \eta$ can be identified with $f$ and hence the conclusion follows.
\end{proof}

\subsection{Individual isometries}
The \emph{type} of an individual group element for the action defined by $F$ can be read from $F$. Recall first that the \emph{translation length} $\ell(g)$ associated to any isometry $g$ of any metric space $Y$ is defined by
$$\ell(g) = \inf \big\{d(g y, y) : y\in Y\big\}.$$
We now have the following trichotomy.

\begin{prop}\label{prop:tricho}
For any $g\in G$, the action defined by $F$ satisfies
$$\ell(g) = \ln \Big(\lim_{n\to\infty} F(g^n)^{\frac1n} \Big).$$
Moreover, exactly one of the following holds.
\begin{enumerate}
\item $F(g^n)$ is uniformly bounded over $n\in\N$; then $g$ is \emph{elliptic}: it fixes a point in $\HH^\alpha$.\label{pt:elliptic}
\item $F(g^n)$ is unbounded  and $\ell(g)=0$; then $g$ is \emph{neutral parabolic}: it fixes a unique point in $\partial \HH^\alpha$ and preserves all corresponding horospheres but has no fixed point in $\HH^\alpha$.\label{pt:parabolic}
\item $\ell(g)>0$; then $g$ is \emph{hyperbolic}: it preserves a unique geodesic line in $\partial \HH^\alpha$ and translates it by $\ell(g)$.\label{pt:hyperbolic}
\end{enumerate}
\end{prop}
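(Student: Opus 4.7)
The plan is to first establish the asymptotic formula via Fekete's subadditivity lemma, and then identify the three dynamical regimes using the Cartan fixed-point theorem together with the classification of isometries of $\HH^\alpha$ in its linear model.

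Setting $a_n = d(g^n p, p)$, the triangle inequality and the fact that each $g^n$ is an isometry give the subadditivity $a_{n+m} \le a_n + a_m$. Fekete's lemma then yields $a_n/n \to \ell^{\mathrm{st}}(g) := \inf_{n\ge 1} a_n/n$. Since $0 \le x - \ln\cosh x \le \ln 2$ for all $x \ge 0$, dividing by $n$ gives
$$\frac{1}{n}\ln F(g^n) = \frac{a_n}{n} + O\!\left(\frac{1}{n}\right) \lra \ell^{\mathrm{st}}(g).$$
The displayed formula therefore reduces to checking $\ell^{\mathrm{st}}(g) = \ell(g)$, which I verify case by case alongside the trichotomy.

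Boundedness of $F(g^n)$ in $n$ is equivalent to boundedness of the orbit $\{g^n p : n \in \N\}$, since $d(g^n p, p) \le 2 d(p, q)$ whenever $g$ fixes $q \in \HH^\alpha$. If this orbit is bounded, the Cartan fixed-point theorem — already invoked for Lemma~\ref{lem:Cartan} — applied to the cyclic group $\langle g\rangle$, produces such a $q$; this is case~(1), and plainly $\ell(g) = \ell^{\mathrm{st}}(g) = 0$. Otherwise $g$ has no fixed point in $\HH^\alpha$, and the classification of isometries of $\HH^\alpha$ for arbitrary cardinals $\alpha$ (obtained via the linear model $\OO_+(B)$ as in~\cite{Burger-Iozzi-Monod}) leaves exactly two possibilities. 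Either $\ell(g) > 0$, in which case $g$ preserves a unique invariant geodesic line and translates it by $\ell(g)$ — giving case~(3), with $\ell^{\mathrm{st}}(g) = \ell(g)$ read off the axis. Or $\ell(g) = 0$ and $g$ fixes a unique point $\xi \in \partial\HH^\alpha$; such a parabolic automatically preserves each horosphere based at $\xi$ (otherwise it would shift Busemann functions, producing a second boundary fixed point and an axis, contradicting $\ell(g) = 0$), so the parametrisation of Section~\ref{sec:horo} identifies the action of $g$ on any such horosphere with a Euclidean isometry of the underlying Hilbert space. Since Euclidean orbits grow at most linearly, formula~\eqref{eq:horo:CNT} forces $a_n = O(\ln n)$ and $\ell^{\mathrm{st}}(g) = 0 = \ell(g)$. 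This finishes case~(2) and the formula.

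The main obstacle is the infinite-dimensional classification in cases~(2) and~(3): the usual compactness-based arguments that in finite dimensions produce an invariant axis for hyperbolic elements or a unique boundary fixed point for parabolic ones are unavailable, and are replaced by the linear-algebraic treatment in the Minkowski model combined with completeness of its natural uniform structure, as developed in~\cite{Burger-Iozzi-Monod}.
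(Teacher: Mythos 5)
Your argument is correct, and it reaches the same endpoint as the paper by a partly different and more self-contained route. The paper's proof is essentially two citations: the identity $\ell(g)=\lim_n \tfrac1n d(g^np,p)$ is quoted as a general fact about isometries of \cat{0} spaces (Lemma~6.6(2) in~\cite{Ballmann-Gromov-Schroeder}), and the trichotomy is quoted from \S4 of~\cite{Burger-Iozzi-Monod} for complete \cat{-1} spaces. You instead derive the limit from Fekete's lemma, which only gives convergence of $\tfrac1n d(g^np,p)$ to the \emph{stable} translation length, and then close the gap $\ell^{\mathrm{st}}(g)=\ell(g)$ case by case: trivially for elliptics, by reading distances off the axis for hyperbolics, and via the logarithmic orbit-growth estimate $d(g^np,p)=O(\ln n)$ coming from~\eqref{eq:horo:CNT} for neutral parabolics. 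This buys a proof of the asymptotic formula that does not invoke the \cat{0} lemma, at the price of making the formula depend on the trichotomy rather than standing on its own; the genuinely hard infinite-dimensional input --- that $\ell(g)>0$ forces an invariant axis on which the infimum is attained, and that a fixed-point-free element with $\ell(g)=0$ has a unique boundary fixed point --- is in both treatments delegated to the linear-model classification of~\cite{Burger-Iozzi-Monod}, so you have not avoided the citation the paper relies on, only the second one. Two small points: your parenthetical justification that a parabolic preserves horospheres (``a shifted Busemann function would produce a second boundary fixed point and an axis'') is more than you need and not valid in general \cat{0} spaces; it is cleaner to note that if $b_\xi(gx)=b_\xi(x)+c$ with $c\neq 0$ then $d(gx,x)\geq |c|$ for every $x$, contradicting $\ell(g)=0$ directly. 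Also, ``Euclidean orbits grow at most linearly'' deserves the one-line triangle-inequality justification $\|h^nv-v\|\leq n\|hv-v\|$, after which the $O(\ln n)$ bound and hence $\ell^{\mathrm{st}}(g)=0$ follow as you say.
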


\begin{rem}
Consider the Picard--Manin space associated to the Cremona group ${\rm Bir}(\mathbf{P}^{2})$ as mentioned in Example~\ref{ex:PM}. Recall that the limit $\lim_{n\to\infty} \deg(g^n)^{1/n}$ is the \emph{dynamical degree} of the birational transformation $g$. Thus we see that the translation length is the logarithm of the dynamical degree, which is a basic fact in the study of the Picard--Manin space.
\end{rem}

\begin{proof}[Proof of Proposition~\ref{prop:tricho}]
Since $\arcosh (F(g^n)) = d(g^n p, p)$, we see that
$$\ln \Big(\lim_{n\to\infty} F(g^n)^{\frac1n} \Big) = \lim_{n\to\infty} \tfrac1n d(g^n p, p).$$
Now the statements of the proposition hold much more generally. Recall that if $p$ is a point of an arbitrary \cat0 space $Y$ on which $G$ acts by isometries, then the translation length of $g\in G$ satisfies
\begin{equation}\label{eq:length}
\ell(g) = \lim_{n\to\infty} \tfrac1n d(g^n p, p),
\end{equation}
see e.g.~Lemma~6.6(2) in~\cite{Ballmann-Gromov-Schroeder}. If in addition $X$ is complete and \cat{-1}, then the above trichotomy holds, see for instance \S4 in~\cite{Burger-Iozzi-Monod}. 
\end{proof}

\section{Self-representations of \texorpdfstring{$\Isomi$}{Is(H)}}
\subsection{Definition of \texorpdfstring{$\ro^\infty_t$}{}}\label{defrot}
We choose a point $p_1\in\HHI$ and consider the corresponding function of hyperbolic type $F_1$ given by the tautological representation of $\Isom(\HHI)$ on $\HHI$. We denote by $\OO$ the stabiliser of $p_1$, which is isomorphic to the infinite\hyph{}dimensional orthogonal group.

Fix $0<t\leq 1$. By Theorem~\ref{thm:power}, the function $F_t=(F_1)^t$ is still of hyperbolic type. Appealing to Corollary~\ref{cor:KHT-group}, we denote by $\ro^\infty_t$ the corresponding representation on $\HH^\alpha$.

Observe that $\alpha\leq\aleph_0$ since $F_t$ is continuous. It follows that $\alpha=\aleph_0$ since $\Isom(\HHI)$ has no non-trivial finite\hyph{}dimensional representation (this is already true for $\OO$ since $\SO(n)$ has no non-trivial representation of dimension~$<n$ for large $n$). Hence we write $\HHI$ for $\HH^\alpha$. Given $g\in\Isomi$, we write $\ell_t(g)$ for its translation length as an isometry under the representation $\ro^\infty_t$ so as not to confuse it with its translation length under the tautological representation --- which we can accordingly denote by $\ell_1(g)$.

\smallskip
Now Proposition~\ref{prop:tricho} has the following consequence.

\begin{cor}\label{cor:type}
We have $\ell_t = t\, \ell_1$ and the representation $\ro^\infty_t$ preserves the type of each element of $\Isom(\HHI)$.\qed
\end{cor}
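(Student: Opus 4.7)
The plan is to apply Proposition~\ref{prop:tricho} twice, once to the tautological representation and once to $\ro^\infty_t$, and to exploit the fundamental defining identity $F_t = (F_1)^t$ from the beginning of Section~\ref{defrot}.

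First I would compute $\ell_t(g)$ directly via the limit formula of Proposition~\ref{prop:tricho}. Since $\ro^\infty_t$ is the representation associated (through Corollary~\ref{cor:KHT-group}) to the function of hyperbolic type $F_t=(F_1)^t$, we have
\[
\ell_t(g) \;=\; \ln\Big(\lim_{n\to\infty} F_t(g^n)^{1/n}\Big) \;=\; \ln\Big(\lim_{n\to\infty} F_1(g^n)^{t/n}\Big) \;=\; t\,\ln\Big(\lim_{n\to\infty} F_1(g^n)^{1/n}\Big) \;=\; t\,\ell_1(g),
\]
where the penultimate equality uses continuity of $x\mapsto x^t$ on $[1,\infty)$ and the fact that $F_1\ge 1$ (so the inner limit lies in $[1,\infty]$, and we can legitimately pull the exponent out). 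Note that this limit exists as a consequence of the same proposition applied to $F_1$, whose existence follows from~\eqref{eq:length} in an arbitrary \cat{0} space.

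For the type statement, I would then traverse the trichotomy of Proposition~\ref{prop:tricho} and check that each of the three conditions is invariant under replacing $F_1$ by $F_t=F_1^t$. Since $F_1\ge 1$ and $t>0$, the sequence $F_t(g^n)=F_1(g^n)^t$ is bounded if and only if $F_1(g^n)$ is bounded, which handles the elliptic case. The equality $\ell_t(g)=t\,\ell_1(g)$ together with $t>0$ shows that $\ell_t(g)=0$ iff $\ell_1(g)=0$ and $\ell_t(g)>0$ iff $\ell_1(g)>0$, which separates the neutral parabolic from the hyperbolic case exactly as in the tautological action. Thus each of the three mutually exclusive alternatives for $g$ under the tautological representation corresponds to the same alternative for $\ro^\infty_t(g)$, proving type preservation.

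No step is genuinely difficult here; the only mild subtlety is justifying that one may interchange the limit and the $t$-th power, which is immediate from $F_1\ge 1$ and the continuity of $x\mapsto x^t$ (with the convention that $\infty^t=\infty$ for $t>0$ covers the hyperbolic case where the limit is infinite before taking logarithm). \qed
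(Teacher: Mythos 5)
Your proposal is correct and is exactly the argument the paper intends: the corollary is stated with a \qed as an immediate consequence of Proposition~\ref{prop:tricho} applied to $F_t=(F_1)^t$, and your computation $\ln\lim F_1(g^n)^{t/n}=t\,\ell_1(g)$ together with the case-by-case check of the trichotomy just spells out the details the authors leave to the reader. Nothing is missing; the justification for interchanging the limit with the $t$-th power (using $F_1\ge 1$ and $t>0$) is the right observation.
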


We can further deduce the following.

\begin{cor}\label{cor:non-elem}
The representation $\ro^\infty_t$ is non-elementary.
\end{cor}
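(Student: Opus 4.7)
The plan is to verify that $\ro^\infty_t$ avoids each of the three forms of elementarity reviewed in Section~\ref{sec:Minkowski}: a fixed point of $\HHI$, a fixed point of $\partial\HHI$, and an invariant geodesic line. Two of these are immediate from Corollary~\ref{cor:type}. First, a fixed point in $\HHI$ would, by Lemma~\ref{lem:Cartan}, force $F_t$ and hence $F_1=(F_t)^{1/t}$ to be bounded, contradicting the unboundedness of the tautological action on $\HHI$. Second, preservation of a geodesic line would force every element of $\Isomi$ to act on that line as an isometry of $\R$, hence be either elliptic or hyperbolic under $\ro^\infty_t$; but any nonzero translation of $E$ under the identification $\Isomi \cong \Mob(E)$ is neutral parabolic in the tautological action and therefore, by Corollary~\ref{cor:type}, remains neutral parabolic under $\ro^\infty_t$.

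The substantive case is ruling out a fixed boundary point $\xi \in \partial\HHI$. The idea is to exploit the Busemann cocycle at $\xi$: it is simultaneously forced to be nontrivial, because $\ro^\infty_t$ possesses hyperbolic elements fixing $\xi$, and forced to be trivial, because $\Mob(E)$ admits no nonzero continuous homomorphism to $\R$. Concretely, assuming $\xi$ is fixed and picking a base point $p$, the map $b \colon \Isomi \to \R$, $b(g) = \beta_\xi(\ro^\infty_t(g) p, p)$, is a continuous homomorphism because the Busemann cocycle identity collapses when $\xi$ is fixed. Applied to a non-trivial homothety $h_\lambda$ of $E$, which is hyperbolic in the tautological action with $\ell_1(h_\lambda)=|\ln\lambda|$, Corollary~\ref{cor:type} ensures that $\ro^\infty_t(h_\lambda)$ is hyperbolic with translation length $t|\ln\lambda|>0$, and since $\xi$ must be one of its two boundary fixed points, $|b(h_\lambda)|=t|\ln\lambda|\neq 0$.

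For the other half, I plan to show that any continuous homomorphism $\phi \colon \Mob(E) \to \R$ is zero. Inversions and hyperplane reflections are involutions, so $\phi$ annihilates them. Any translation $\tau_v$ is conjugated to its inverse by $-\Id\in\OO(E)$, giving $\phi(\tau_v)=-\phi(\tau_v)$, hence $\phi(\tau_v)=0$. The finitary subgroup of $\OO(E)$ is generated by reflections via Cartan--Dieudonn\'e and is dense in $\OO(E)$, so continuity forces $\phi|_{\OO(E)}=0$. Since $\Mob(E)$ is generated by $\OO(E)$, translations, and inversions, $\phi$ is identically zero, contradicting the nontriviality of $b$ established above and thereby excluding the fixed-boundary case.

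The main obstacle is the boundary case: neither Lemma~\ref{lem:Cartan} nor element-wise type information suffices, and the rigidity needed ultimately comes from the perfection-flavoured algebraic structure of $\Mob(E)$ invoked in the previous paragraph. With that in hand, the entire argument reduces to the three short verifications above.
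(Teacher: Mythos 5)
Your argument is correct, and all three cases are handled soundly: the fixed-point case via Lemma~\ref{lem:Cartan}, the invariant-geodesic case via the observation that an isometry preserving a geodesic line is elliptic or hyperbolic while translations of $E$ are neutral parabolic and stay so by Corollary~\ref{cor:type}, and the boundary case via the Busemann character. The route differs from the paper's in how the last two cases are dispatched. The paper collapses them into a single step: it restricts to a copy of $\PSL_2(\R)$ inside $\Isomi$, notes that perfectness forces this subgroup (in either elementary scenario) to fix a boundary point \emph{and} preserve the corresponding horospheres, and then derives a contradiction because a hyperbolic element of $\PSL_2(\R)$ remains hyperbolic under $\ro^\infty_t$ by Corollary~\ref{cor:type} and hence cannot preserve a horosphere. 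Your treatment of the boundary case replaces ``perfectness of $\PSL_2(\R)$ kills the Busemann character'' by a direct proof that $\Mob(E)$ admits no nonzero continuous character, via Cartan--Dieudonn\'e, the density of the finitary orthogonal group, and the conjugation of translations to their inverses by $-\Id$; this is longer but self-contained at the level of the full group, and it also makes explicit the exact value $|b(h_\lambda)|=\ell_t(h_\lambda)=t|\ln\lambda|$ (which, as you implicitly use, follows from the homomorphism property of $b$ together with the asymptotic formula~\eqref{eq:length}). Both approaches rest on the same key ingredient, namely the type- and length-preservation of Corollary~\ref{cor:type}; the paper's version is the more economical one, yours trades brevity for an elementary verification that avoids invoking perfectness.
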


We shall also prove that $\ro^\infty_t$ is irreducible, but it will be more convenient to deduce it later on.

\begin{proof}[Proof of Corollary~\ref{cor:non-elem}]
Suppose for a contradiction that $\ro^\infty_t$ is elementary. It cannot fix a point since $F_t$ is unbounded. Thus it either fixes a point at infinity or preserves a geodesic line. Choose a copy of $\PSL_2(\R)$ in $\Isomi$; being perfect, it fixes a point at infinity in either cases and preserves the corresponding horospheres. This is however impossible because a hyperbolic element of $\PSL_2(\R)$ must remain hyperbolic under $\ro^\infty_t$ by Corollary~\ref{cor:type}.
\end{proof}

\subsection{Restricting to finite dimensions}
We begin with a general property of $\Isomi$.

\begin{prop}\label{prop:BO}
Let $L\cong \R$ be a one\hyph{}parameter subgroup of hyperbolic elements of $\Isomi$. Then an arbitrary isometric $\Isomi$-action on a metric space has bounded orbits if and only if $L$ has bounded orbits.
\end{prop}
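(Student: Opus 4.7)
The forward implication is trivial, since $L\le\Isomi$ forces any bound on $\Isomi$-orbits to restrict to a bound on $L$-orbits. The content is in the converse, and the strategy I would follow is a Cartan-type $KAK$ decomposition of $\Isomi$ combined with the Bergman-type strong boundedness of the infinite\hyph{}dimensional orthogonal group.

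Fix a point $p_0\in\HHI$ on the axis of $L$ and let $K\le\Isomi$ denote its stabiliser, which is an isomorphic copy of $\OO$. First I would establish the factorisation $\Isomi=KLK$: given $g\in\Isomi$, pick $a\in L$ translating $p_0$ by the distance $d(p_0,gp_0)$ along the axis; the group $K$ acts transitively on each metric sphere centred at $p_0$ in $\HHI$ (via $\exp_{p_0}$, this transitivity is the transitivity of the orthogonal group of a separable infinite\hyph{}dimensional real Hilbert space on its unit sphere); hence there exists $k_1\in K$ with $k_1 a p_0=gp_0$, whereupon $k_2:=(k_1 a)^{-1}g$ fixes $p_0$ and lies in $K$, giving $g=k_1 a k_2$.

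Now suppose $L$ has bounded orbits for some isometric action of $\Isomi$ on a metric space $Y$, and fix $y\in Y$. For any $g\in\Isomi$ written as $g=k_1 a k_2$ with $k_1,k_2\in K$ and $a\in L$, the triangle inequality and the isometric character of each factor give
\[
d(gy,y)\le d(k_1 y,y)+d(ay,y)+d(k_2 y,y).
\]
The middle term is uniformly bounded over $a\in L$ by hypothesis. To bound the outer terms uniformly over $k_1,k_2\in K$, I would invoke the Bergman property of $K\cong\OO$: every isometric action of the separable infinite\hyph{}dimensional orthogonal group on a metric space has bounded orbits, a theorem of Ricard--Rosendal. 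Summing the three bounds proves that the $\Isomi$-orbit of $y$ is bounded, completing the converse.

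The main obstacle is precisely invoking the strong boundedness of $\OO$: this is the essential external input, and it is what makes the proposition non-trivial, as no continuity is assumed on the ambient action of $\Isomi$. Everything else is a direct adaptation of the finite\hyph{}dimensional Cartan $KAK$ decomposition, where the transitivity of the stabiliser on spheres carries over verbatim to $\HHI$.
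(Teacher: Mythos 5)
Your proposal is correct and follows essentially the same route as the paper: the Cartan-like decomposition $\Isomi=\OO L\OO$ obtained from the transitivity of the point stabiliser on spheres (equivalently, on directions) at a point of the axis of $L$, combined with the Ricard--Rosendal theorem that every isometric action of $\OO$ on a metric space has bounded orbits. The paper states this more tersely, but the decomposition, the triangle-inequality bookkeeping, and the external input are the same.
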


\begin{proof}
Let $p$ be a point on the axis associated to $L$ and let $\OO$ be the stabiliser of $p$, which is isomorphic to the infinite\hyph{}dimensional (separable) orthogonal group. Then we have the Cartan-like decomposition $\Isomi= \OO L \OO$; indeed, this follows from the transitivity of $\OO$ on the space of directions at the given point $p$. On the other hand, any isometric action of  $\OO$ on any metric space has bounded orbits, see~\cite[p.~190]{Ricard-Rosendal}. The statement follows.
\end{proof}

In order to restrict a representation to finite\hyph{}dimensional subgroups, we choose an exhaustion of our Minkowski space  $\R\oplus \sH$ by finite\hyph{}dimensional Minkowski subspaces, for instance by choosing a nested sequence of subspaces $\R^n\se H$ with dense union. This gives us a nested sequence of totally geodesic subspaces
$$\HH^n \se  \HH^{n+1} \se  \ldots \se \HHI$$
with dense union, together with embeddings $\Isom(\HH^n) < \Isomi$ preserving $\HH^n$. Moreover, the union of the resulting nested sequence of subgroups
$$\Isom(\HH^n) \se  \Isom(\HH^{n+1}) \se  \ldots \se \Isomi$$
is dense in $\Isomi$: this can e.g.\ be deduced from the density of the union of all $\OO(n)$ in $\OO$ together with a Cartan decomposition (as introduced above) for some $L\se \Isom(\HH^2)$.

\begin{prop}\label{prop:restriction}
Any continuous non-elementary self\hyph{}representation of $\Isomi$ remains non-elementary when restricted to $\Isom(\HH^n)$ for any $n\geq 2$.
\end{prop}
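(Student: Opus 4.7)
My plan is to argue by contradiction. I assume $\pi|_{\Isom(\HH^n)}$ is elementary, and fix once and for all a one-parameter subgroup $L \cong \R$ of hyperbolic isometries of $\Isomi$ sitting inside $\Isom(\HH^2) \subseteq \Isom(\HH^n)$. The goal is to show that this forces $\pi(L)$ to have bounded orbits in $\HHI$. Once this is achieved, Proposition~\ref{prop:BO} applied to the $\Isomi$-action defined by $\pi$ on $\HHI$ would yield that $\pi(\Isomi)$ itself has bounded orbits, whence by Lemma~\ref{lem:Cartan} the representation $\pi$ would be elementary, contradicting the hypothesis.

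The elementarity of the restriction splits into three sub-cases, and in the first two the desired bounded-orbit statement for $\pi(L)$ is immediate. If $\pi(\Isom(\HH^n))$ fixes a point $p \in \HHI$ then so does the subgroup $\pi(L)$, which then has a one-point orbit. If $\pi(\Isom(\HH^n))$ preserves a geodesic line $\gamma$, the induced homomorphism $\Isom(\HH^n) \to \Isom(\gamma) \cong \R \rtimes \Z/2$ factors on the connected identity component $\Isom^+(\HH^n)$ through the abelian $\R$, and vanishes there because $\Isom^+(\HH^n)$ is a simple (hence perfect) connected Lie group for $n \geq 2$. Thus $\pi(\Isom^+(\HH^n))$ fixes $\gamma$ pointwise and $\pi(L) \subseteq \pi(\Isom^+(\HH^n))$ again fixes a point.

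The real work is in the third case, where $\pi(\Isom(\HH^n))$ fixes some $\xi \in \partial \HHI$ but fixes no point of $\HHI$ and preserves no geodesic line. The absence of a preserved line forces the entire fixed set of $\pi(\Isom(\HH^n))$ at infinity to be the singleton $\{\xi\}$: any second fixed boundary point would give a preserved geodesic. The same perfectness argument as in case (b) would kill the Busemann homomorphism at $\xi$, so $\pi(\Isom(\HH^n))$ preserves every horosphere based at $\xi$. To propagate the fixing of $\xi$ outward I would introduce the centralizer $\OO'$ of $\Isom(\HH^n)$ in $\Isomi$, which in the Minkowski model of Section~\ref{sec:Minkowski} is concretely $\OO(\sH')$ acting on the orthogonal complement $\sH'$ of $\R^n$ in $\sH$. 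Since $\pi(\OO')$ commutes with $\pi(\Isom(\HH^n))$ and the latter has $\{\xi\}$ as its entire fixed set at infinity, $\pi(\OO')$ must also fix $\xi$; combined with $\pi(\OO(n)) \subseteq \pi(\Isom(\HH^n))$ fixing $\xi$, the stabiliser $\OO(n) \times \OO'$ of $\R^n$ in $\OO$ is contained in the $\pi$-preimage of the stabiliser of $\xi$.

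The Cartan-type decomposition $\Isomi = \OO \cdot L \cdot \OO$ used in the proof of Proposition~\ref{prop:BO}, together with $L \subseteq \Isom(\HH^n)$, would reduce the contradiction to showing that the remaining factor $\pi(\OO)$ fixes $\xi$ too, for then $\pi(\Isomi)\xi = \pi(\OO)\pi(L)\pi(\OO)\xi = \xi$ would directly contradict non-elementarity. Here lies the main obstacle: extending ``fixes $\xi$'' from the subgroup $\OO(n) \times \OO'$ to all of $\OO = \OO(\sH)$. My plan is to view $\pi(\OO)$ as acting orthogonally on the tangent space at its Ricard--Rosendal fixed point $q \in \HHI$, to interpret $\xi$ as a unit vector $v_\xi \in T_q\HHI$ fixed by $\OO(n) \times \OO'$, and to invoke the representation theory of $\OO(\aleph_0)$ --- specifically that every non-trivial continuous irreducible orthogonal representation has no non-zero $\OO(n)\times \OO(\sH')$-fixed vector --- to conclude that $v_\xi$ lies in the trivial isotypic component and is therefore fixed by all of $\OO$. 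This representation-theoretic input, available through the work of Kirillov--Olshanski and Tsankov, is where the true content of the proposition would reside.
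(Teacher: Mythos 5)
Your reduction to the three elementary cases is sound, and cases (a) and (b) are handled correctly: combining a fixed point (or pointwise-fixed axis) for $\pi(L)$ with Proposition~\ref{prop:BO} and Lemma~\ref{lem:Cartan} does contradict non-elementarity. The structure of case (c) — pin down a unique $\xi\in\partial\HHI$, show it is fixed by ever larger subgroups, and conclude by the Cartan-like decomposition $\Isomi=\OO L\OO$ and density — is also the right idea. The problem is the representation-theoretic input you rely on to pass from $\OO(n)\times \OO(\sH')$ to all of $\OO$: the claim that no non-trivial continuous irreducible orthogonal representation of $\OO$ has a non-zero $\OO(n)\times\OO(\sH')$-fixed vector is \emph{false}. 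By the Kirillov--Olshanski classification, the continuous irreducible representations of $\OO(\ell^2)$ are the Schur functors $S_\lambda(\sH)$ of the defining representation, and precisely because the ``metric tensor'' $\sum_i e_i\otimes e_i$ is not square-summable, $\mathrm{Sym}^2\sH$ (Hilbert--Schmidt symmetric operators) is irreducible and non-trivial. Yet its $\OO(n)\times\OO(\sH')$-fixed subspace is the line spanned by $\sum_{i=1}^n e_i\otimes e_i$, which is not $\OO$-fixed. So a unit tangent vector at $q$ fixed by $\OO(n)\times\OO(\sH')$ need not lie in the trivial isotypic component, and your final step does not close. (Your intermediate steps in case (c) — that the fixed set at infinity is the singleton $\{\xi\}$, hence the centraliser $\OO'$ fixes $\xi$ — are fine, but they feed into this unusable conclusion.)

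The paper closes case (c) by purely geometric means, exploiting more of the structure than the $\OO(n)\times\OO(\sH')$-invariance alone. Taking $w\in\PSL_2(\R)$ with $wgw\inv=g\inv$, one sees that $g$ cannot act hyperbolically (its two endpoints would be swapped by $w$, yet both $g$ and $w$ fix $\xi$), so $g$ is parabolic and $\xi$ is its \emph{unique} fixed point at infinity. Then for every $m\geq n$ the restriction to $\Isom(\HH^m)$ is again elementary (otherwise Proposition~2.1 of~\cite{Monod-Py} would force $g$ to act hyperbolically), fixes no point of $\HHI$, hence $\Isom(\HH^m)^\circ$ fixes a boundary point which must be $\xi$ by uniqueness; normality and the density of $\bigcup_m\Isom(\HH^m)$ then make $\xi$ fixed by all of $\Isomi$, the desired contradiction. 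If you want to keep your route through $\OO$, you would need to inject comparable extra information (e.g.\ the parabolicity of $g$ and the fixing of $\xi$ by all of $\Isom(\HH^m)$, not merely by $\OO(n)\times\OO'$); as written, the argument has a genuine gap at its decisive step.
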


\begin{proof}
It suffices to show that the representation remains non-elementary when restricted to the connected component $\Isom(\HH^2)^\circ\cong \PSL_2(\R)$ of $\Isom(\HH^2)$; suppose otherwise. We denote by $g$ a non-trivial element of the one\hyph{}parameter subgroup $L$ of hyperbolic elements represented by $t\mapsto \left[\begin{smallmatrix}e^t&0\\ 0&e^{-t}\end{smallmatrix}\right]$ and by $w$ the involution represented by $\left[\begin{smallmatrix}0&-1\\ 1&0\end{smallmatrix}\right]$. By Proposition~\ref{prop:BO} and continuity, $g$ cannot fix a point in $\HHI$. Therefore our apagogical assumption implies that $\Isom(\HH^2)^\circ$ either fixes a point in $\partial\HHI$ or preserves a geodesic line. Since $\Isom(\HH^2)^\circ$ is perfect, the former case holds anyway; let thus $\xi\in\partial\HHI$ be a point fixed by $\Isom(\HH^2)^\circ$. Now $g$ cannot act hyperbolically because otherwise it would have exactly two fixed points at infinity exchanged by $w$ because $w$ conjugates $g$ to $g\inv
 $; this would contradict the fact that both $g$ and $w$ fix $\xi$. Thus $g$ is parabolic.

We claim that $\xi$ is in fact fixed by $\Isom(\HH^n)^\circ$ for all $n\geq 2$. First, we know that $\Isom(\HH^n)$ acts elementarily, because otherwise Proposition~2.1 from~\cite{Monod-Py} would imply that $g$ acts hyperbolically. Next, $\Isom(\HH^n)$ cannot fix a point in $\HHI$ since $g$ does not. Thus $\Isom(\HH^n)$ fixes a point at infinity or preserves a geodesic and we conclude again by perfectness of $\Isom(\HH^n)^\circ$ that $\Isom(\HH^n)^\circ$ fixes some point in $\partial\HHI$. This point has to be $\xi$ because $g$, being parabolic, has a \emph{unique}
 fixed point at infinity. This proves the claim.

Finally, no other point than $\xi$ is fixed by $\Isom(\HH^n)^\circ$ since this holds already for $g$. Therefore, $\xi$ is also fixed by $\Isom(\HH^n)$ since the latter normalises $\Isom(\HH^n)^\circ$. Therefore $\Isomi$ fixes $\xi$ by density. This contradicts the assumption that the self\hyph{}representation was non-elementary. 
\end{proof}

\subsection{Completion of the proofs}
A crucial remaining step is the following result, which relies notably on our classification from~\cite{Monod-Py}.

\begin{main}\label{thm:exunic}
Choose a point $p_0\in \HHI$. For every irreducible continuous self\hyph{}representation $\ro$ of $\Isomi$ there is $0<t\leq 1$ and $p\in \HHI$ such that
\begin{equation}\label{eq:exunic}
\cosh d\big(\ro(g) p, \ro(h) p\big)  = \big(\cosh d(g p_0, h p_0)\big)^t  
\end{equation}
holds for all $g,h\in \Isomi$.
\end{main}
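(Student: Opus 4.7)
The strategy is to reduce the problem to the finite-dimensional classification from \cite{Monod-Py} by restricting $\rho$ to the nested subgroups $\Isom(\HH^n)$, and then to pass to the limit using the same concentration-of-measure phenomenon that underlies the proof of Theorem~\ref{thm:power}.

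First I would identify the parameter $t$. For each $n\geq 2$, Proposition~\ref{prop:restriction} ensures that $\rho|_{\Isom(\HH^n)}$ is non-elementary, hence preserves a unique minimal invariant hyperbolic subspace $Y_n\subseteq \HHI$ on which it acts irreducibly. By the classification in \cite{Monod-Py}, this irreducible action is equivalent to $\rho^n_{t_n}$ for some $t_n\in(0,1]$. The parameter $t_n$ is detected by translation lengths: for any hyperbolic $g_0\in \Isom(\HH^2)\subseteq \Isom(\HH^n)$, the finite\hyph{}dimensional analogue of Corollary~\ref{cor:type} gives $\ell(\rho(g_0))=t_n\ell_1(g_0)$, a quantity intrinsic to $\rho$ and $g_0$. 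Hence $t_n=t$ is independent of $n$. By minimality the $Y_n$ are nested, and irreducibility of $\rho$ together with density of $\bigcup_n \Isom(\HH^n)$ in $\Isomi$ forces $\bigcup_n Y_n$ to be dense in $\HHI$.

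Next I would locate a candidate point $p$ and reduce to a one-variable verification. The stabiliser $\OO=\Stab_{\Isomi}(p_0)$, being isomorphic to the infinite\hyph{}dimensional orthogonal group, has bounded orbits under any isometric action (the Ricard--Rosendal fact invoked in Proposition~\ref{prop:BO}), so $\rho(\OO)$ has bounded orbits on $\HHI$ and by Lemma~\ref{lem:Cartan} fixes a point $p\in \HHI$. Both $F(g):=\cosh d(\rho(g)p,p)$ and $F_t(g):=(\cosh d(gp_0,p_0))^t$ are then $\OO$-bi\hyph{}invariant (the first because $\rho(\OO)p=p$, the second because $\OO$ stabilises $p_0$). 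Using a Cartan decomposition $\Isomi=\OO L\OO$ with $L<\Isom(\HH^2)$ a one\hyph{}parameter subgroup of hyperbolic elements through $p_0$ (as in the proof of Proposition~\ref{prop:BO}), it suffices to check $F=F_t$ on $L$.

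Finally I would verify the identity on $L$ via the explicit integral formula of \cite{Monod-Py} and take the limit $n\to\infty$. For each $n$, $p$ is fixed by $\rho(\OO_n)$ where $\OO_n=\OO\cap\Isom(\HH^n)\cong \OO(n)$, and one argues that this property identifies $p$ (or at worst its orthogonal projection onto $Y_n$) with the canonical base point $q_n$ of the Monod--Py construction for $\rho^n_t$: this is where the irreducibility of $\rho^n_t$ combines with the representation theory of $\OO(n)$ to pin down the fixed point. Substituting this identification into the formula from \cite[\S3.B--C]{Monod-Py} yields, for $g\in L\subseteq \Isom(\HH^n)$,
$$\cosh d(\rho(g)p,p)=\int_{\bS^{n-1}}(\cosh u+b_1\sinh u)^t\,db,\qquad u=d(gp_0,p_0).$$
The left\hyph{}hand side is independent of $n$, so letting $n\to\infty$ and invoking the concentration\hyph{}of\hyph{}measure computation of the proof of Theorem~\ref{thm:power} gives $F(g)=(\cosh u)^t=F_t(g)$, as desired. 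Equation~\eqref{eq:exunic} then follows by left\hyph{}invariance.

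The main obstacle is the last paragraph's identification step: a priori $p$ need not lie in any $Y_n$, and the fixed\hyph{}point set of $\rho(\OO)$ might be larger than a point. Handling this cleanly requires either showing that the $q_n$ can be chosen compatibly (e.g.\ so that $q_n$ is the closest\hyph{}point projection of $q_{n+1}$ onto $Y_n$) and that the resulting sequence has a limit in $\HHI$ that serves as $p$, or else a direct analysis showing that $F(g)$ is insensitive to the choice of fixed point within the $\rho(\OO)$-fixed set. Either way, the key technical input is the near\hyph{}uniqueness of $\OO_n$-fixed points in an irreducible non-trivial hyperbolic representation.
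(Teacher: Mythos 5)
Your strategy coincides with the paper's: restrict $\ro$ to the nested $\Isom(\HH^n)$, use Proposition~\ref{prop:restriction} and the classification of \cite{Monod-Py} to extract a common parameter $t$ and nested minimal subspaces $Y_n$ with dense union, produce $p$ as a fixed point of $\ro(\OO)$, and recover $(\cosh u)^t$ from the concentration-of-measure limit computed in the proof of Theorem~\ref{thm:power}. The one step you leave open is, however, a genuine gap, and your displayed identity
$$\cosh d\big(\ro(g)p,p\big)=\int_{\bS^{n-1}}(\cosh u+b_1\sinh u)^t\,db$$
cannot hold as written for every $n$: for $t<1$ the right-hand side varies with $n$ and only \emph{converges} to $(\cosh u)^t$ as $n\to\infty$, while the left-hand side is a single number. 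The underlying reason is exactly the one you flag: $p$ need not lie in any $Y_n$, so no finite $n$ gives an exact formula for distances in the orbit of $p$ itself.

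The paper closes this gap with the "downward" version of your first alternative. Define $p_n$ to be the nearest-point projection of $p$ onto $Y_n$. Because this projection is $\Isom(\HH^n)$-equivariant, $p_n$ is fixed by $\ro(\OO(n))$, and Lemma~3.9 of \cite{Monod-Py} says that the $\OO(n)$-fixed point of $Y_n$ is \emph{unique}; hence $p_n$ is the canonical base point of $\ro^n_t$, and one has the exact identity $\cosh d(\ro(g)p_n,\ro(h)p_n)=\int_{\bS^{n-1}}(\cosh u+b_1\sinh u)^t\,db$ with $u=d(gp_0,hp_0)$ for $g,h\in\Isom(\HH^n)$. Irreducibility of $\ro$ makes $\bigcup_n Y_n$ dense in $\HHI$, so $p_n\to p$, and letting $n\to\infty$ on both sides yields \eqref{eq:exunic} on $\bigcup_n\Isom(\HH^n)$, hence on $\Isomi$ by density and continuity. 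This simultaneously disposes of your worry that the fixed-point set of $\ro(\OO)$ might be large: whichever fixed point $p$ you start from, its projections are forced to be the canonical base points, and the limit identity holds for that $p$. Your Cartan-decomposition reduction to the one-parameter group $L$ is a harmless variant of the paper's two-variable formulation, but it does not by itself address this base-point issue.
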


Notice that the parameter $t$ is uniquely determined by~\eqref{eq:exunic}; indeed this formula implies that $t$ is the ratio of translation lengths
$$\frac{\ell(\ro(g))}{\ell(g)}$$
for any hyperbolic element $g$. Therefore, Theorem~\ref{thm:exunic}, combined with Theorem~\ref{thm:KHT-char}, already imply the uniqueness result stated as Theorem~\ref{thm:uniqueness} in the Introduction --- except for two points. First, the existence result of Theorem~\ref{thm:exist} is still needed to give any substance to this uniqueness statement, namely, we must still establish the irreducibility of the representations $\ro^\infty_t$. Secondly, since Theorem~\ref{thm:uniqueness} is stated without continuity assumption, we need to establish the  automatic continuity of Theorem~\ref{thm:auto}. We defer this (independent) proof to Section~\ref{sec:auto}.

\begin{proof}[Proof of Theorem~\ref{thm:exunic}]
We choose the exhaustion by finite\hyph{}dimensional spaces $\HH^n$ in such a way that they all contain $p_0$; in particular, the stabiliser $\OO$ of $p_0$ meets each $\Isom(\HH^n)$ in a subgroup isomorphic to $\OO(n)$.

By Proposition~\ref{prop:restriction}, the restriction of $\ro$ to $\Isom(\HH^n)$ is non-elementary and hence admits a unique minimal invariant hyperbolic subspace, which we denote by $\HHI_n\se \HHI$. Note that $\HHI_n\se \HHI_{n+1}$ holds. In view of the classification that we established in Theorem~B of~\cite{Monod-Py}, there is $0<t\leq 1$ such that $\ell(\ro(g)) = t \, \ell(g)$ holds for every $g\in \Isom(\HH^n)$. In particular, it follows that $t$ does not depend on $n$.

Next we observe that $\ro(\OO)$ fixes some point $p\in\HHI$. This follows of course from the much stronger result of~\cite{Ricard-Rosendal} cited in the above proof of Proposition~\ref{prop:BO}, but it can also be deduced e.g.\ from the fact that $\OO$ has property~(T) as a polish group~\cite[Rem.~3(i)]{Bekka03}.

Let $p_n$ be the nearest-point projection of $p$ to $\HHI_n$. Then $p_n$ is fixed by $\OO(n)$ since the projection map is equivariant under $\Isom(\HH^n)$. Since the union of all $\HHI_n$ is dense in $\HHI$ by irreducibility of the representation, it follows that the sequence $(p_n)$ converges to $p$. We deduce that
$$\cosh d\big(\ro(g) p, \ro(h) p\big)  = \lim_{n\to\infty} d\big(\ro(g) p_n, \ro(h) p_n\big)$$
holds for all $g,h\in\Isomi$. We now claim that we have
$$ \lim_{n\to\infty} d\big(\ro(g) p_n, \ro(h) p_n\big) =  \big(\cosh d(g p_0, h p_0)\big)^t$$
for all $g,h$ in the union of all $\Isom(\HH^k)$. This follows from our classification of the irreducible representations of $\Isom(\HH^n)$ on $\HHI$ (Theorem~B in~\cite{Monod-Py}) together with the fact that $p_n$ is the unique point of $\HHI_n$ fixed by $\OO(n)$ (Lemma~3.9~\cite{Monod-Py}), and from the computation of the limit performed in the proof of Theorem~\ref{thm:power}.

In conclusion, we have indeed established the relation~\eqref{eq:exunic} on the union of all $\Isom(\HH^n)$, and hence on $\Isomi$ by density.
\end{proof}

\begin{rem}\label{rem:unique:O}
The above proof also shows that $p$ is the unique fixed point of $\OO$ under the (arbitrary) irreducible continuous self\hyph{}representation $\ro$.
\end{rem}

\begin{proof}[Proof of Theorem~\ref{thm:exist}]
By construction, $\ro^\infty_t$ is a continuous self\hyph{}representation such that~\eqref{eq:exunic} holds for some point $p$ with hyperbolically total orbit. We need to argue that it is irreducible.

By Corollary~\ref{cor:non-elem}, $\ro^\infty_t$ is non-elementary and therefore it contains a unique irreducible part, or equivalently preserves a unique minimal hyperbolic subspace $\HH'\se\HHI$. We now apply Theorem~\ref{thm:exunic} to the resulting representation on $\HH'$, yielding some parameter $t'$ and some point $p'\in\HH'$. We observe that $t'=t$; one way to see this is to read the translation lengths from the asymptotic formula~\eqref{eq:length}, which is unaffected by a change of base-point.

Since the orbit of $p$ under $\ro^\infty_t$ is hyperbolically total, it suffices to show that $p\in\HH'$ to deduce $\HH'=\HHI$ which completes the proof. By Remark~\ref{rem:unique:O}, the nearest-point projection of $p$ to $\HH'$ is $p'$. Choose now any $g\in \Isomi$ not in $\OO$. Then $\ro^\infty_t(g)$ does not fix $p$, and moreover the projection of $\ro^\infty_t(g)p$ to $\HH'$ is $\ro^\infty_t(g)p'$. Since~\eqref{eq:exunic} holds for both $p$ and $p'$, we have
$$d(\ro^\infty_t(g)p, p) = d(\ro^\infty_t(g)p', p').$$
The sandwich lemma (in the form of Ex.~II.2.12 in~\cite{Bridson-Haefliger}) now implies that the four points $p$, $\ro^\infty_t(g)p$, $\ro^\infty_t(g)p'$, $p'$ span a Euclidean rectangle. Since we are in a \cat{-1} space, this rectangle is degenerate and we conclude $p=p'$, as desired.
\end{proof}

\begin{rem}
We could have shortened the above proof  of Theorem~\ref{thm:exist} by \emph{defining} $\ro^\infty_t$ to be the irreducible part of the representation constructed from a kernel. However, we find that there is independent interest in knowing that a given kernel is associated to an irreducible representation.
\end{rem}

We finally turn to the M\"obius group formulation of Theorem~\ref{thm:exist}.

\begin{proof}[Proof of Theorem~\ref{thm:Mobnew}]

We use the second model for the hyperbolic space $\HHI$, as in section~\ref{sec:model}. In particular $\HHI$ sits inside the linear space $\R^2\oplus E$. 

Thinking of the representation $\ro^\infty_t$ as a self\hyph{}representation of the group $\Mob(E)$, we can assume up to conjugacy that the stabiliser of $\infty$ in $\Mob(E)$ is mapped to itself. Note that this stabiliser is precisely the group of similarities of $E$. The image under $\ro^\infty_t$ of the group of homotheties, being made of hyperbolic element of $\Mob(E)$, must then fix a unique point $q$ of $E$. Conjugating again, we assume that $q=0$, i.e. that the images of homotheties are linear maps of $E$. This also implies that $\OO(E)$ maps to itself. The assertion about homotheties in Theorem~\ref{thm:Mobnew} is now a direct consequence of the fact that $\ro^\infty_t$ multiplies translation lengths in $\HHI$ by $t$. It is also a formal consequence that the translation by $v\in E$ is sent to an isometry whose translation part has norm $c\| v\|^t$ for some constant $c>0$ independent of $v$. Conjugating once more by a homothety, we can assume that $c=1$. 

It remains only to justify that the induced self\hyph{}representation of $\Isom(E)$ is affinely irreducible: suppose that $F\se E$ is a closed affine subspace invariant under $\ro^\infty_t(\Isom(E))$. As in the proof of Theorem~\ref{thm:exunic}, we denote by $\HHI_n\se \HHI$ the unique minimal invariant hyperbolic subspace under $\Isom(\HH^n)$ seen as a subgroup of the source $\Mob(E)$. The previous discussion implies that $0\in F$, hence $F$ is linear. It further implies that $\HHI_n$ is of the form 
$$\HHI\cap \left(\R^2\oplus E_n\right)$$ 
for some increasing sequence of closed subspaces $E_n \se E$ with dense union. It thus suffices to show $E_n\se F$. This follows from Lemma~2.2 and Proposition~2.4 in~\cite{Monod-Py}.
\end{proof}

\section{Automatic continuity}\label{sec:auto}

Tsankov~\cite{Tsankov13} proved that every isometric action of $\OO$ on a Polish metric space is continuous. We do not know whether $\Isomi$ enjoys such a strong property. However, we shall be able to prove the automatic continuity of Theorem~\ref{thm:auto} by combining Tsankov's result for $\OO$ with the following fact about the local structure of $\Isomi$.

\begin{prop}\label{prop:auto}
Let $\OO$ be the stabiliser in $\Isomi$ of a point in $\HHI$ and let $g\notin\OO$. Let further $U\se \OO$ be a neighbourhood of the identity in $\OO$. Then $\OO g\inv U g \OO$ is a neighbourhood of $\OO$ in $\Isomi$.
\end{prop}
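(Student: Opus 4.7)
The plan is to reduce the statement to a controlled geometric decomposition using the stabilisers of two distinct points. Let $p\in\HHI$ be the point fixed by $\OO$, set $q:=g\inv p$ and $s:=d(p,q)>0$, so that $g\inv\OO g=\Isom_q$. Conjugation by $g$ is a homeomorphism $\Isom_q\to\OO$, hence $V:=g\inv U g$ is a neighbourhood of the identity in $\Isom_q$. The set $\OO V \OO$ is stable under left and right multiplication by $\OO$, so it suffices to exhibit a neighbourhood $W$ of the identity in $\Isomi$ contained in $\OO V \OO$: then $\OO W$ is an open neighbourhood of $\OO$ sitting inside $\OO V \OO = \OO g\inv U g \OO$.

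The candidate is $W_\eta := \{h\in\Isomi : d(hp,p)<\eta\}$ for some small $\eta\in(0,2s)$ to be chosen. Given $h\in W_\eta$, set $y:=hp$ and $r:=d(y,p)<\eta$. The hyperbolic triangle inequality shows that the metric sphere of radius $r$ around $p$ meets the metric sphere of radius $s$ around $q$, and any point $y'$ of the intersection satisfies $d(y',p)=r<\eta$. Since $\OO$ acts transitively on the sphere of radius $r$ around $p$, pick $k_1\in\OO$ with $k_1 y' = y$. Working in the linear model, write $p=(\cosh s)q + (\sinh s)u$ and $y'=(\cosh s)q + (\sinh s)v$ for unit vectors $u,v$ of the Hilbert space $(T_q,-B)$, with $T_q:=q^\perp$, and define $j\in\Isom_q$ to be the rotation in the two\hyph{}plane $\mathrm{span}(u,v)$ that takes $u$ to $v$ and is the identity on the $B$\hyph{}orthogonal complement of $\mathrm{span}(u,v)$; then $jp = y'$. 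Setting $k_2:=j\inv k_1\inv h$, the identity $k_2 p = j\inv k_1\inv y = j\inv y' = p$ shows $k_2\in\OO$, while $h = k_1 j k_2$ holds by construction.

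It remains to choose $\eta$ so that $j\in V$ for every $h\in W_\eta$ and every admissible $y'$. A basic neighbourhood of the identity in $\Isom_q$ has the form $V=\{j\in\Isom_q : d(jx_i,x_i)<\delta,\ i=1,\ldots,m\}$. A direct computation of $B(y',p)$ in the linear model yields $\langle u,v\rangle_{-B} = ((\cosh s)^2 - \cosh r)/(\sinh s)^2$, so $\|u-v\|$ depends only on $r$ and tends to $0$ as $r\to 0$, uniformly in the choice of $y'$. The associated planar rotation $j$ depends continuously on $(u,v)$ in the strong operator topology, hence $d(jx_i,x_i)\to 0$ for each $i$; choosing $\eta$ small enough forces $j\in V$ as required. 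The only delicate step, and the main technical obstacle, is handling the planar rotation $j$ in infinite dimensions and its convergence to the identity; this follows from the standard construction of two\hyph{}dimensional rotations in the orthogonal group of a Hilbert space applied to $(T_q,-B)$, which transports continuously to $\Isom_q$ through the linear model.
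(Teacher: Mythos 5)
Your argument is correct, and its overall architecture coincides with the paper's: both reduce the statement to showing that the metric ball $W_\eta=\{h: d(hp,p)<\eta\}$ lies in $\OO\, g\inv U g\, \OO$, and both do so by producing, for each prescribed small displacement $r=d(hp,p)$, an element $v\in g\inv U g$ with $d(vp,p)=r$ and then invoking transitivity of $\OO$ on the sphere of radius $r$ about $p$ to write $h\in\OO v\OO$. The difference is in how that element is produced. The paper argues softly: it chooses a connected neighbourhood of the identity in a circle subgroup $\SO(2)\le\OO$ (rotating a totally geodesic $\HH^2$ through $p$ and $gp$) which is contained in $U$ and does not fix $gp$; since $u\mapsto d(gp,ugp)$ is continuous on this connected set and takes the value $0$ as well as some positive value, its image contains an interval $[0,\epsilon)$, and conjugating by $g\inv$ yields the required element with no computation. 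You instead construct the element explicitly as a planar rotation $j$ fixing $q=g\inv p$ and sending $p$ to a point $y'$ of the intersection of the two spheres, and must then verify quantitatively that $j\in g\inv Ug$; this is where your operator-norm estimate enters, and it does work: on the Minkowski model one has $\|j-\mathrm{Id}\|=\|u-v\|$ on $q^\perp$, uniformly in the choice of two-plane, whence $\cosh d(jx,x)\le 1+\|u-v\|\,\|w\|^2$ for $x=aq+w$, so $j$ falls into any prescribed basic neighbourhood once $\eta$ is small. What each approach buys: the intermediate-value trick of the paper eliminates all estimates and the (mild but real) care needed at the degenerate limit $v=u$ of your "continuous dependence" claim, while your version is more explicit and yields a concrete modulus $\eta=\eta(U,g)$. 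Two presentational points: you should say that $g\inv Ug$ \emph{contains} a basic neighbourhood of the stated form rather than redefining it to be one, and you should treat the trivial case $y'=p$ (where $j=\mathrm{Id}$) separately so that the rotation is well defined.
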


\begin{proof}
We claim that the set
$$J=\big\{ d(g p, u g p) : u\in  U \big\}$$
contains the interval $[0, \epsilon)$ for some $\epsilon>0$. Indeed, consider a (totally geodesic) copy of $\HH^2$ in $\HHI$ containing both $p$ and $g p$ and denote by $\SO(2)< \OO$ a corresponding lift of the orientation-preserving stabiliser of $p$ in $\Isom(\HH^2)^\circ$. Note that $\SO(2)$ does not fix $gp$. Since $\SO(2)$ is locally connected, there is a connected neighbourhood $V$ of the identity in $\SO(2)$ with $V\se \SO(2)\cap U$. Since moreover $\SO(2)$ is connected but does not fix $g p$, we deduce that $V$ cannot fix $g p$. Considering that $d(g p, v g p)$ is in $J$ when $v\in V$ and that $V$ is connected, the claim follows.

To establish the proposition, we shall prove that every $h\in\Isomi$ with $d(p, h p)<\epsilon$ lies in $\OO g\inv U g \OO$. By the claim, there is $u\in U$ with $d(g p, u g p)= d(p, h p)$. In other words, there is $q\in g\inv U g$ with $d(p, q p) = d(p, h p)$. By transitivity of $\OO$ on any sphere centered at $p$, there is $q'\in\OO$ with $q' q p = hp$. We conclude $h\in q' q\OO$, as claimed.
\end{proof}

\begin{proof}[Proof of Theorem~\ref{thm:auto}]
Let $\ro\colon\Isomi\to\Isomi$ be an irreducible self\hyph{}representation and let $\OO<\Isomi$  be the stabiliser of a point in $\HHI$ for its tautological representation. Then $\ro(\OO)$ has bounded orbits by~\cite[p.~190]{Ricard-Rosendal} and hence fixes a point $p\in\HHI$ by the Cartan fixed-point theorem~\cite[II.2.8]{Bridson-Haefliger}. We claim that the function
$$D\colon \Isomi \longrightarrow \R_+,  \kern10mm D(g) = d\big(\ro(g) p, p\big)$$
is continuous at $e\in\Isomi$. Let thus $(g_n)$ be a sequence converging to $e$ in $\Isomi$. We fix some $g\notin\OO$; then Proposition~\ref{prop:auto} implies that we can write $g_n = k_n g\inv u_n g k'_n$ for $k_n, u_n, k'_n\in\OO$ such that the sequence $(u_n)$ converges to~$e$. Since $\ro(\OO)$ fixes $p$, we have
$$D(g_n) = d\big(\ro(u_n)\ro(g)p, \ro(g)p\big).$$
Tsankov~\cite{Tsankov13} proved that every isometric action of $\OO$ on a Polish metric space is continuous; therefore the right hand side above converges to zero as $n\to\infty$, proving the claim.

It now follows that $D$ is continuous on all of $\Isomi$, because if $g_n\to g_\infty$ in $\Isomi$ we can estimate
$$\Big| d\big(\ro(g_n) p, p\big) - d\big(\ro(g_\infty) p, p\big) \Big| \leq d\big(\ro(g_n) p, \ro(g_\infty) p\big) = d\big(\ro(g_\infty\inv g_n) p, p\big).$$
Equivalently, the function of hyperbolic type $\cosh D$ is continuous. Since $\ro$ is irreducible, the orbit of $p$ is hyperbolically total. It follows that $\ro$ is continuous, see Theorem~\ref{thm:KHT-char} and Corollary~\ref{cor:KHT-group}.
\end{proof}

\section{Finite\hyph{}dimensional post-scripta}\label{sec:PS}
\begin{flushright}
\begin{minipage}[t]{0.7\linewidth}\itshape\small
Et je vais te prouver par mes raisonnements\ldots\\
Mais malheur \`{a} l'auteur qui veut toujours instruire\,!\\
Le secret d'ennuyer est celui de tout dire.
\begin{flushright}
\upshape\small
--- Voltaire, \itshape\ 
Sur la nature de l'homme, \upshape\ 1737\\ Vol.~I p.~953 of the 1827 edition by Jules Didot l'a\^{\i}n\'e (Paris). 

\end{flushright}
\end{minipage}
\end{flushright}

Our previous work~\cite{Monod-Py} focussed on representations $\ro^n_t$ into $\Isomi$ of the \emph{finite\hyph{}dimensional} groups $\Isom(\HH^n) \cong \PO(1,n)$. A main application was the construction of exotic locally compact deformations of the classical Lobachevsky space $\HH^n$. Nonetheless, a significant part of that article was devoted to the ana\-lysis of an equivariant map
\begin{equation*}
f^n_t\colon \HH^n \lra \HHI
\end{equation*}
canonically associated to the representation with parameter $0<t<1$. We proved notably that $f^n_t$ is a harmonic map whose image is a minimal submanifold of curvature $\frac{-n}{t(t+n-1)}$.

Furthermore, we investigated a quantity denoted by $I_u$ in~\cite[\S3.C]{Monod-Py} which, in the language of the present article, is none other than the radial function of hyperbolic type associated to $f^n_t$. More precisely, we showed that $\HHI$ contains a unique point $p$ fixed by $\OO(n)$ under $\ro^n_t$. Therefore, we have a bi-$\OO(n)$-invariant function of hyperbolic type
\begin{equation*}
F\colon  \Isom(\HH^n) \lra \R_+, \kern10mm F(g) = \cosh d\left(\ro^n_t(g) p, p\right).
\end{equation*}
Being bi-$\OO(n)$-invariant, $F$ can be represented by
\begin{equation*}
F_0\colon \R_+ \lra \R_+, \kern10mm F_0(u) = F(e^{u X}), 
\end{equation*}
where $e^{u X}$ represents any one-parameter group of hyperbolic elements for a Cartan decomposition with respect to $\OO(n)$, normalized so that $e^X$ has translation length~$1$. Now $I_u=F_0(u)$.

\medskip
It was proved in~\cite{Monod-Py} that $f^n_t$ is large-scale isometric; we now propose a more precise and more uniform statement.

\begin{prop}\label{prop:metric}
For all $u\geq 0$ we have
\begin{equation*}
\cosh(t u) \leq F_0(u) \leq \cosh^t(u).
\end{equation*}
It follows that for all $x,y\in\HH^n$ we have
\begin{equation}\label{presque-iso}
0 \leq d\left(f^n_t(x), f^n_t(y)\right) - t\, d(x,y) \leq (1-t) \log 2.
\end{equation}
\end{prop}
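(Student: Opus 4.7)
The plan is to start from the explicit radial formula for $F_0$ that was computed in the proof of Theorem~\ref{thm:power}, namely
$$F_0(u) = \int_{\bS^{n-1}} \big(\cosh u + b_1 \sinh u\big)^{t}\, db,$$
and extract both inequalities from the concavity of $x\mapsto x^t$ on $(0,\infty)$.

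For the upper bound I would apply Jensen's inequality directly: since $\int_{\bS^{n-1}} b_1\, db = 0$ by symmetry, we get
$$F_0(u) \leq \Big(\int_{\bS^{n-1}}(\cosh u + b_1\sinh u)\,db\Big)^{t} = \cosh^{t}(u).$$
For the lower bound I would first factor out $\cosh u$ and rewrite $F_0(u) = \cosh^{t}(u)\cdot \E\big[(1+B\tanh u)^{t}\big]$, where $B=b_1$ is a symmetric $[-1,1]$-valued random variable. The function $\phi(x)=(1+x\tanh u)^{t}$ is concave on $[-1,1]$ (since $1+x\tanh u\geq 1-\tanh u>0$ and $t\leq 1$), so the chord inequality $\phi(x)\geq \tfrac{1-x}{2}\phi(-1)+\tfrac{1+x}{2}\phi(1)$ and $\E[B]=0$ give $\E[\phi(B)]\geq \tfrac12(\phi(-1)+\phi(1))$. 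Multiplying back by $\cosh^{t}(u)$ yields exactly
$$F_0(u) \geq \frac{(\cosh u - \sinh u)^{t}+(\cosh u+\sinh u)^{t}}{2}= \cosh(tu).$$

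To translate the two inequalities on $F_0$ into~\eqref{presque-iso}, note that by bi-$\OO(n)$-invariance and equivariance of $f^{n}_{t}$ one has $\cosh d(f^{n}_{t}(x),f^{n}_{t}(y))=F_0(d(x,y))$ for all $x,y$. Hence with $u=d(x,y)$ we obtain $\cosh(tu)\leq \cosh d(f^{n}_{t}(x),f^{n}_{t}(y))\leq \cosh^{t}(u)$. Taking $\arcosh$ gives the left inequality of~\eqref{presque-iso} immediately, and reduces the right one to the elementary claim that
$$G(u) := \arcosh(\cosh^{t}u) - tu$$
satisfies $G(u)\leq (1-t)\log 2$ on $[0,\infty)$. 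I would verify $G(0)=0$ and compute that, after clearing the square root, $G'(u)\geq 0$ is equivalent to $\cosh^{2t-2}u\leq 1$, which holds since $t\leq 1$. A short asymptotic expansion using $\cosh^{t}u\sim 2^{-t}e^{tu}$ and $\arcosh y\sim \log(2y)$ gives $\lim_{u\to\infty}G(u)=(1-t)\log 2$, so $G$ is non-decreasing with this limit as supremum, which is exactly the stated bound.

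The only step that requires a little thought is the lower bound on $F_0$: the trick is the factorisation $F_0(u) = \cosh^{t}(u)\,\E[(1+B\tanh u)^{t}]$, which turns the integrand into a concave function of the bounded symmetric variable $B$, so that the chord inequality replaces $B$ by its extremal $\{\pm 1\}$-valued counterpart and reproduces $\cosh(tu)$ on the nose. The remaining steps are routine: Jensen for the upper bound, and a one-variable monotonicity argument for passing from the inequalities on $F_0$ to~\eqref{presque-iso}.
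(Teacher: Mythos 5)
Your proof is correct, but it takes a genuinely different route from the paper's. You work entirely from the explicit integral formula $F_0(u)=\int_{\bS^{n-1}}(\cosh u+b_1\sinh u)^t\,db$ (derived in the proof of Theorem~\ref{thm:power}) and extract both bounds from concavity of $x\mapsto x^t$: Jensen's inequality with $\int_{\bS^{n-1}} b_1\,db=0$ gives the upper bound, and the chord inequality applied to the concave function $x\mapsto(1+x\tanh u)^t$ on $[-1,1]$ gives the lower bound, collapsing neatly to $\tfrac12(e^{-tu}+e^{tu})=\cosh(tu)$. The paper instead argues geometrically: the upper bound comes from the fact that the restriction of $\ro^\infty_t$ to $\Isom(\HH^n)$ contains $\ro^n_t$ as its irreducible part and that nearest-point projection onto the minimal invariant hyperbolic subspace decreases distances (so $F_0(u)\le\cosh^t u$, the latter being the exact value for $f^\infty_t$); the lower bound comes from $\arcosh F_0(u)=d(\ro^n_t(e^{uX})p,p)\ge\ell(\ro^n_t(e^{uX}))=tu$. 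Your computational approach is self-contained once the integral formula is granted and makes both inequalities visibly sharp in complementary regimes; the paper's approach is shorter and explains \emph{why} the bounds hold (projection versus translation length), at the cost of invoking the classification from the earlier paper. Your derivation of~\eqref{presque-iso} via monotonicity of $G(u)=\arcosh(\cosh^t u)-tu$ and the limit $(1-t)\log 2$ is exactly the ``elementary calculus'' the paper leaves to the reader, and it is carried out correctly.
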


In other words, $f^n_t$ fails to be an isometry after rescaling by $t$ only by an additive error bounded by $\log 2$ independently of the dimension $n$ and of $t$. Of course, equation~\eqref{presque-iso} also holds for $f^\infty_t$ instead of $f^{n}_t$. This follows either by applying the same proof as below to $f^\infty_t$, or by taking the limit in equation~\eqref{presque-iso} as $n$ goes to infinity. 

For the local behaviour of $f^n_t$, we refer the reader to Proposition~3.10 in~\cite{Monod-Py}. As for the local behaviour of $f^\infty_t$, one can observe that equation~\eqref{eq:exist} from the introduction immediately implies that
\begin{equation*}
d(f^\infty_t(x),f^\infty_t(y))=\sqrt{t}\,d(x,y)+O\left(d(x,y)^2\right)
\end{equation*} 
as $d(x,y)$ goes to $0$. 

Recalling that the usual spherical metric $d_{\bS}$ on $\bS^{n-1}$ or $\bS^\infty$ can be realized as a visual metric at infinity induced by $\HH^n$ or $\HHI$, which is defined in terms of exponentials of distances in $\HH^n$ or $\HHI$ (see e.g.~\cite[p.~434]{Bridson-Haefliger}), the following is an immediate consequence.

\begin{cor}
The map $f^n_t$ induces a bi-Lipschitz embedding of the snowflake $(\bS^{n-1}, d_{\bS}^t)$ into the round sphere $(\bS^\infty, d_{\bS})$.\qed
\end{cor}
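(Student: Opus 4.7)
The strategy is to transfer the almost-isometry information of Proposition~\ref{prop:metric} from the interior of the hyperbolic spaces to their visual boundaries. Recall that for a \cat{-1} space, the canonical visual metric at a base point $o$ has the form $d_{\bS}(\xi,\eta) = e^{-(\xi|\eta)_o}$, where $(\xi|\eta)_o$ is the Gromov product, computed as $\lim \tfrac12(d(o,x_n)+d(o,y_n)-d(x_n,y_n))$ for any sequences $x_n\to\xi$, $y_n\to\eta$ (this is the normalization for which the sphere at infinity of $\HH^n$ recovers the round sphere up to bi-Lipschitz equivalence; see~\cite[III.H.3]{Bridson-Haefliger}).

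The first step is to fix a base point $o\in\HH^n$ and show that $f^n_t$ extends to a boundary map $\partial f^n_t\colon\partial\HH^n\to\partial\HHI$. This is immediate from~\eqref{presque-iso}: since $f^n_t$ is $(t,(1-t)\log 2)$\hyph{}quasi\hyph{}isometric, it sends any geodesic ray based at $o$ to a quasi-geodesic ray in $\HHI$, which by the Morse lemma lies at bounded distance from a unique geodesic ray, giving a well-defined limit point in $\partial\HHI$.

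Next, I would estimate the Gromov product of the images. Taking $x_n\to\xi$, $y_n\to\eta$ in $\HH^n$ and applying~\eqref{presque-iso} to each of the three distances appearing in the defining limit of $(f^n_t(\xi)|f^n_t(\eta))_{f^n_t(o)}$, the additive errors combine to yield
\begin{equation*}
\bigl|(\partial f^n_t(\xi) \mid \partial f^n_t(\eta))_{f^n_t(o)} \;-\; t\,(\xi|\eta)_o\bigr| \;\leq\; C
\end{equation*}
for some universal constant $C$ (one can take $C = \tfrac32(1-t)\log 2$). Exponentiating converts this additive control on the Gromov products into multiplicative control on the visual metrics:
\begin{equation*}
e^{-C}\, d_{\bS}(\xi,\eta)^t \;\leq\; d_{\bS}(\partial f^n_t(\xi),\partial f^n_t(\eta)) \;\leq\; e^{C}\, d_{\bS}(\xi,\eta)^t.
\end{equation*}
This is precisely the bi-Lipschitz equivalence of the snowflake $(\bS^{n-1},d_{\bS}^t)$ with its image in $(\bS^\infty,d_{\bS})$.

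The step that requires the most care is matching the visual metric of $\partial\HH^n$ (defined intrinsically via Gromov products in $\HHI$, via $f^n_t(o)$) with the standard round metric $d_{\bS}$ on $\bS^\infty$ alluded to in the corollary statement and in the reference to~\cite[p.~434]{Bridson-Haefliger}. Both metrics are bi-Lipschitz equivalent to the chordal/angular metric from the ambient Minkowski realization, so the switch is harmless up to absorbing further multiplicative constants; however, one must check that the same base-point normalization is being used on both sides, or else explicitly allow an additional Lipschitz conjugation coming from the change of base point, which is itself a bi-Lipschitz map.
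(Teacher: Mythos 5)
Your proposal is correct and is essentially the paper's own argument: the paper treats the corollary as an immediate consequence of Proposition~\ref{prop:metric} once $d_{\bS}$ is realized as a visual metric given by exponentials of Gromov products, which is exactly the computation you carry out (your constant $C=\tfrac32(1-t)\log 2$ is valid, and in fact $(1-t)\log 2$ already suffices). The only difference is that you spell out the standard details (boundary extension via the Morse lemma, base-point independence up to bi-Lipschitz) that the paper leaves implicit.
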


\begin{proof}[Proof of Proposition~\ref{prop:metric}]
Thanks to Proposition~\ref{prop:restriction}, the restriction of $\ro^\infty_t$ to $\Isom(\HH^n)$ has an irreducible part, which is necessarily isomorphic to $\ro^n_t$ due to the classification of~\cite{Monod-Py}. Considering that the distances decrease when projecting to the corresponding minimal invariant hyperbolic subspace (just as in the proof of Theorem~\ref{thm:exist}), we deduce $F_0(u) \leq \cosh^t(u)$.

On the other hand, $\arcosh F_0(u)$ is always bounded below by the translation length of $e^{u X}$ under $\ro^\infty_t$, which is $t u$. This yields $F_0(u) \geq \cosh(t u)$.

Now the inequalities involving distances follow by elementary calculus methods since $\cosh d\left(f^n_t(x), f^n_t(y)\right) =F_0(u)$ when $u=\cosh d(x,y)$.
\end{proof}



\bibliographystyle{amsplain}
\bibliography{biblio_mp2}

\end{document}